\newtheorem{lemma}{Lemma}
\newtheorem{theorem}{Theorem}
\newcommand{\tr}{\operatorname{Tr}}
\newcommand{\defeq}{\stackrel{\smash{\textnormal{\tiny def}}}{=}}
\newcommand\footnoteref[1]{\protected@xdef\@thefnmark{\ref{#1}}\@footnotemark}
\def\C{\mathbb{C}}
\def\Q{\mathbb{Q}}
\def\R{\mathbb{R}}
\def\P{\mathbb{P}}
\def\dM{\mathbb{M}}
\def\0{\mathbf{0}}
\begin{document}

\title{Real Schur norms and Hadamard matrices}  

\author{
	John Holbrook\footnote{Department of Mathematics \& Statistics, University of Guelph, Guelph, ON, Canada N1G 2W1}
\and	
	Nathaniel Johnston\footnote{Department of Mathematics \& Computer Science, Mount Allison University, Sackville, NB, Canada E4L 1E4}\textsuperscript{$\ \ *$}
\and
Jean-Pierre Schoch\footnote{89 Hayes Ave., Guelph, ON, Canada N1E 5V7}
}

\date{June 5, 2022}

\maketitle

\begin{abstract}
    We present a preliminary study of Schur norms $\|M\|_{\textup{S}}=\max\{ \|M\circ C\|: \|C\|=1\}$, where $M$ is a matrix whose entries are $\pm1$, and $\circ$ denotes the entrywise (i.e., Schur or Hadamard) product of the matrices. We show that, if such a matrix $M$ is $n\times n$, then its Schur norm is bounded by $\sqrt{n}$, and equality holds if and only if it is a Hadamard matrix. We develop a numerically efficient method of computing Schur norms, and as an application of our results we present several almost Hadamard matrices that are better than were previously known.
\end{abstract}
\let\thefootnote\relax\footnotetext{E-mails: jholbroo@uoguelph.ca, njohnston@mta.ca, jp\_schoch@yahoo.co.uk}
  
\section{Introduction}\label{sec:intro}

A student new to matrix analysis might be surprised to note that the most commonly used norm for matrices (often called the ``operator norm''), i.e.,
$$
    \|M\| \defeq \max\{\|M\mathbf{u}\|:\|\mathbf{u}\|=1\},
$$
behaves somewhat perversely: it is not generally diminished when the matrix entries are diminished (in modulus). The simplest examples suffice:
\[
    \left\|\begin{bmatrix}1&-1\\1&1\end{bmatrix}\right\|=\sqrt{2}\approx 1.4142, \quad \text{while} \quad \left\|\begin{bmatrix}1&-1\\0&1\end{bmatrix}\right\|=\frac{1+\sqrt{5}}{2}\approx 1.6180.
\]
 
One might then ask how extreme this effect can be, i.e., what is the value of
\begin{equation} \label{E1}
    \max_{A,B}\{\|B\|/\|A\|:|b_{i,j}|\leq|a_{i,j}| \ \text{for all} \ i,j\}?
\end{equation}
It turns out that the answer to this question depends on the matrix size and is easy when working with complex matrices, but very hard for real matrices: the first author noticed some years ago that a complete answer in the real case would also settle the existence question for Hadamard matrices (see the upcoming Theorem~\ref{thm:c_c and r_n}). While this connection is unlikely to help resolve that existence question, it does suggest that the study of question~\eqref{E1} will be challenging and worthwhile. The present note initiates that study.

Our setting is the Hilbert space $\C^n$ or $\R^n$, i.e., the space of column vectors of dimension $n$, furnished with the usual Euclidean inner product and norm. We denote by $\mathbb{M}_n(\C)$ and $\mathbb{M}_n(\R)$ the algebras of $n\times n$ complex and real matrices, respectively. These act on the relevant Hilbert spaces by matrix-vector multiplication, and the operator norms $\|M\|$ are simply the norms of the corresponding linear mappings. A real matrix $M$ may be regarded as acting on either $\C^n$ or $\R^n$ and it is a standard fact that the operator norm is independent of this choice.

\section{Schur Norms}\label{sec:schur_norms}

The question~\eqref{E1} can be conveniently framed in the context of Schur norms, which we now introduce. Given $n\times n$ matrices $M$ and $C$, we denote by $M\circ C$ their entrywise (i.e., ``Schur'' or ``Hadamard'') product, and the \emph{Schur norm} $\|M\|_{\textup{S}}$ of $M$ is defined by
 \begin{equation} \label{E2}
	\|M\|_{\textup{S}} = \max_{C \in \mathbb{M}_n(\C)}\{\|M\circ C\|:\|C\|=1\}.
\end{equation}
Let $\P_n(\R)$ (and $\P_n(\C)$) denote the set of matrices $M \in \mathbb{M}_n(\R)$ (or in $\mathbb{M}_n(\C)$) such that all $|m_{i,j}|\leq 1$ for all $1 \leq i,j \leq n$. Then, in terms of Schur norms, question~\eqref{E1} is asking for the values of
\begin{equation} \label{E3}
	c_n \defeq \max\{\|M\|_{\textup{S}} : M\in\P_n(\C)\},
\end{equation}
and
\begin{equation} \label{E4}
	r_n \defeq \max\{\|M\|_{\textup{S}} : M\in\P_n(\R)\}.
\end{equation}
We will show that, while $c_n$ is easy to evaluate, knowing the values of $r_n$ for all $n$ would resolve the famous conjecture about existence of Hadamard matrices (i.e., matrices with mutually orthogonal columns and entries $\pm 1$) when $n$ is a multiple of $4$.

While Schur norms are more difficult to compute than the operator norm, there are a number of useful techniques that may be applied. The simplest case occurs when $M$ is positive semidefinite; Schur showed in 1911 \cite{Sch11} that in this case we have $\|M\|_{\textup{S}} = \max_j \{m_{j,j}\}$. Another simple-to-evaluate case was established in \cite{Mat93}, where it was shown that if $M$ is a circulant matrix with top row $c_0$, $c_1$, $\ldots$, $c_{n-1}$, then
\begin{equation} \label{E6}
	\|M\|_{\textup{S}} = \frac{1}{n}\sum_{k=0}^{n-1}|p(\omega^k)|,
\end{equation}
where $\omega=e^{2\pi i/n}$ and $p(z)=c_0+c_1z+\dots+c_{n-1}z^{n-1}$.

Schur also showed that if $M$ is any matrix (rectangular or square) then we have
\begin{equation} \label{E5}
	\|M\|_{\textup{S}} \leq \min\{\|R\|_{\textup{r}}\|C\|_{\textup{c}}: RC=M\},
\end{equation}
where $\|R\|_{\textup{r}}$ denotes the ``row norm'' of $R$, i.e., the maximum norm of its rows, and $\|C\|_{\textup{c}}$ similarly denotes the largest of the norms of the columns of $C$. Many years later, Grothendieck (and independently Haagerup) showed that we actually have equality in~\eqref{E5}; see the discussion in \cite{DD07}. An elegant treatment of these results may be found in \cite[Section~1.4 and Chapter~3]{Bha97}.

While the formula~\eqref{E5} can be useful for obtaining upper bounds on Schur norms, it does not provide a practical means for computation, since it is not clear how to perform the minimization. In Section~\ref{sec:compute_schur}, we will use a result of Paulsen, Power, and Smith to develop a method that allows numerical approximation to any accuracy (and sometimes exact evaluation) of Schur norms via semidefinite programming. Our method allows for the computation of Schur norms in polynomial time, contrary to the commonly-repeated statement that computation of this norm is hard \cite{Hla99,DD07}.

\section{Schur Witnesses}\label{sec:witnesses}

For a given matrix $M$, we call a contraction $C$ (i.e., a matrix with $\|C\|\leq 1$) such that $\|M\circ C\|=\|M\|_{\textup{S}}$ a \emph{witness} for the Schur norm of $M$. More precise forms
for the witness are available, as the following theorem explains.

\begin{theorem}\label{thm:witness forms}
    \begin{itemize}
        \item[(1)] If $M\in\dM_n(\R)$ then there is a real witness for $\|M\|_{\textup{S}}$.
        \item[(2)] If $M\in\dM_n(\R)$ then there is a (real) orthogonal witness for $\|M\|_{\textup{S}}$.
        \item[(3)] If $M\in\dM_n(\C)$ then there is a unitary witness for $\|M\|_{\textup{S}}$.
    \end{itemize}
\end{theorem}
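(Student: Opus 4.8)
The plan is to prove the three statements in increasing order of strength, obtaining each from the previous one together with a symmetrization/averaging argument. For part~(3), the unitary case, I would start from the fact that the set of contractions $\{C : \|C\| \le 1\}$ is a compact convex set whose extreme points are exactly the unitary matrices (in the complex case). Since $C \mapsto \|M \circ C\|$ is a convex function (a maximum of linear functionals $C \mapsto |\langle \u, (M\circ C)\v\rangle|$), its maximum over the ball of contractions is attained at an extreme point, hence at a unitary. This gives a unitary witness.

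For part~(1), I would take any witness $C$ and show its real part works. Write $C = A + iB$ with $A, B$ real. The key observation is that for a real matrix $M$ and real unit vectors $\u, \v$, we have $\langle \u, (M \circ C)\v \rangle = \langle \u, (M\circ A)\v\rangle + i \langle \u, (M\circ B)\v\rangle$, and since $M\circ A$ and $M\circ B$ are real, the operator norm of $M \circ C$, computed over real vectors (which suffices for real matrices), satisfies $\|M\circ C\|^2 \le \|M\circ A\|^2 + \|M\circ B\|^2$ — actually I need to be more careful here, since $\|M\circ A\|$ and $\|M\circ B\|$ are attained at possibly different vectors. The cleaner route: for any fixed real unit vectors $\u,\v$ we have $|\langle \u,(M\circ C)\v\rangle|^2 = \langle\u,(M\circ A)\v\rangle^2 + \langle\u,(M\circ B)\v\rangle^2 \le \max(\|M\circ A\|^2, \|M\circ B\|^2)$ only if both are bounded by the same quantity, which they need not be. Instead I would use a rotation trick: replace $C$ by $e^{i\theta}C$ (still a contraction, still a witness since $\|M\circ e^{i\theta}C\| = \|M\circ C\|$), and choose $\theta$ so that the vectors $\u,\v$ achieving the norm of $M\circ(e^{i\theta}C)$ make $\langle\u,(M\circ B_\theta)\v\rangle = 0$ where $e^{i\theta}C = A_\theta + iB_\theta$; then $\|M\circ A_\theta\| \ge |\langle\u,(M\circ A_\theta)\v\rangle| = \|M\circ C\|_{\textup S}$, and $A_\theta = \mathrm{Re}(e^{i\theta}C)$ is a real contraction. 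So a real witness exists.

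For part~(2), I would combine~(1) with convexity as in~(3), but now over the \emph{real} contraction ball, whose extreme points are the real orthogonal matrices (this is the real analogue of the unitary characterization of extreme points of the operator ball — here one must be slightly careful, as the extreme points of the real unit ball are the matrices $Q$ for which there is no nonzero $X$ with $\|Q \pm X\| \le 1$, and these turn out to be the orthogonal matrices when the sizes match). Starting from a real witness $C$ guaranteed by~(1), convexity of $C\mapsto\|M\circ C\|$ on the real contraction ball forces the maximum to be attained at an extreme point, i.e.\ at a real orthogonal $Q$, giving an orthogonal witness.

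The main obstacle I expect is pinning down the extreme-point structure of the operator-norm unit balls cleanly — in particular the real case, where ``extreme points are orthogonal matrices'' requires the matrices to be square and requires ruling out non-orthogonal extreme points via the singular value decomposition (an extreme point must have all singular values equal to $1$). A secondary subtlety is the rotation trick in part~(1): one must check that the angle $\theta$ can indeed be chosen to annihilate the imaginary bilinear form along the optimal vectors, which follows because as $\theta$ ranges over $[0,\pi)$ the quantity $\langle\u_\theta,(M\circ B_\theta)\v_\theta\rangle$ changes sign (it is the imaginary part of $e^{i\theta}$ times a fixed complex number, evaluated at the optimizer), and an intermediate-value / compactness argument closes the gap. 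Everything else is routine convexity and the standard fact that operator norm of a real matrix is the same over $\R^n$ and $\C^n$.
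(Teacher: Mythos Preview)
Your arguments for parts~(2) and~(3) match the paper's: convexity of $C\mapsto\|M\circ C\|$ forces the maximum over the contraction ball to be attained at an extreme point, and those extreme points are exactly the orthogonal (respectively unitary) matrices.

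Your approach to part~(1), however, has a genuine gap. The rotation trick fails because the norming vectors $\u,\v$ for the \emph{complex} matrix $M\circ C$ are themselves complex in general, and a single global phase $e^{i\theta}$ cannot force the bilinear quantity $\langle\u,(M\circ B_\theta)\v\rangle$ to vanish. Writing $\alpha=\langle\u,(M\circ A)\v\rangle$ and $\beta=\langle\u,(M\circ B)\v\rangle$, one has $\langle\u,(M\circ B_\theta)\v\rangle=\alpha\sin\theta+\beta\cos\theta$, and this has a real root in $\theta$ only when $\beta/\alpha\in\R$. For a concrete failure take $M=\bigl(\begin{smallmatrix}1&1\\1&1\end{smallmatrix}\bigr)$ (so $\|M\|_{\textup S}=1$) and the witness $C=\tfrac{1}{\sqrt2}\bigl(\begin{smallmatrix}1&i\\0&0\end{smallmatrix}\bigr)$. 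The norming vectors are $\u=(1,0)$ and $\v=\tfrac{1}{\sqrt2}(1,-i)$, giving $\alpha=\tfrac12$, $\beta=-\tfrac{i}{2}$, hence $\beta/\alpha=-i\notin\R$. Indeed $A_\theta=\operatorname{Re}(e^{i\theta}C)=\tfrac{1}{\sqrt2}\bigl(\begin{smallmatrix}\cos\theta&-\sin\theta\\0&0\end{smallmatrix}\bigr)$ has $\|M\circ A_\theta\|=1/\sqrt2$ for \emph{every} $\theta$, strictly below $\|M\|_{\textup S}=1$. Your intermediate-value justification implicitly assumes the norming vectors are real, which is exactly what is not yet established.

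The paper's remedy is to replace the global phase by \emph{entrywise} phases: choose diagonal unitaries $D,E$ with $D\u=|\u|$ and $E\w=|\w|$ (where $\w=(M\circ C)\u/\|M\circ C\|$). Since diagonal matrices slide through the Schur product, $E(M\circ C)D^*=M\circ(ECD^*)$, so $\|M\circ C\|=\langle(M\circ(ECD^*))\,|\u|,\,|\w|\rangle$. Now the vectors are real and $M$ is real, so only $ECD^*$ contributes anything non-real; replacing it by its real part $R$ (still a contraction) gives $\|M\circ C\|\le\|M\circ R\|$. The missing idea is that making the \emph{vectors} real requires a diagonal-unitary conjugation rather than a scalar one.
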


We note that condition~(1) of this theorem was proved in \cite[Corollary~3.3]{Mat93b}, along with many other useful results related to Schur norms. We give another, more direct, proof of this result.

\begin{proof}[Proof of Theorem~\ref{thm:witness forms}]
    (1) Let $M\in\dM_n(\R)$. We establish the result by showing that for any contraction $C \in \dM_n(\C)$, there exists a contraction $R$ in $\dM_n(\R)$ such that $\|M\circ C\|\leq \|M\circ R\|$.
    
    To this end, let $\mathbf{u}$ be a norming vector for $M\circ C$, i.e., $\|\mathbf{u}\|=1$ and $\|(M\circ C)\mathbf{u}\|=\|M\circ C\|$. Let $\mathbf{w}$ be the unit vector $(M\circ C)\mathbf{u}/\|M\circ C\|$, so that $\|M\circ C\|=\langle (M\circ C)\mathbf{u}, \mathbf{w}\rangle$. Let $D$ be a diagonal unitary such that $D\mathbf{u}=|\mathbf{u}|$ where $|\mathbf{u}|_k=|u_k|$, and 
    let $E$ be a diagonal unitary such that $E\mathbf{w}=|\mathbf{w}|$. Then $\|M\circ C\|=\langle(M\circ C)D^*|\mathbf{u}|,E^*|\mathbf{w}|\rangle$. Since $D$ and $E$ are diagonal,
    $E(M\circ C)D^*=M\circ(ECD^*)$, so that
    $$
        \|M\circ C\| = \langle (M\circ (ECD^*))|\mathbf{u}|,|\mathbf{w}|\rangle.
    $$
    The only (possibly) non-real element in the above equation is $Y=ECD^*$, so we also have
    \[
        \|M\circ C\|=\langle (M\circ R)|\mathbf{u}|,|\mathbf{w}|\rangle,
    \]
    where $R$ is the real part of $Y$. Since $\|R\|\leq \|Y\|=\|C\|\leq 1$, we conclude that and that $\|M\circ C\|\leq\|M\circ R\| \||\mathbf{u}|\| \||\mathbf{w}|\| = \|M\circ R\|$, completing the proof of~(1).
    
    (2) The function $f(C) := \|M\circ C\|$ is convex with respect to the variable $C$ in the unit ball of contractions in $\dM_n(\R)$. Thus $f$ attains its maximum on some extreme point of that unit ball, i.e., at some orthogonal matrix.
    
    (3) Argue as in the proof of (2), but noting that the extreme points of the unit ball of $\dM_n(\C)$ are the unitary matrices (in fact, in the complex case any contraction is the average of just two unitaries).
\end{proof}

It is perhaps worth noting that, in the proof of (1) above, we actually get $\|M\circ C\| = \|M\circ R\|$, not just $\|M\circ C\| \leq \|M\circ R\|$. To see that $\|M\circ R\|\leq \|M\circ C\|$ as well, notice that
\[
    M\circ R=(M\circ Y + M\circ \overline{Y})/2,
\]
$\|M\circ \overline{Y}\|=\|\overline{M\circ Y}\|=\|M\circ Y\|$, and $\|M\circ Y\|=\|E(M\circ C)D^*\|=\|M\circ C\|$.

The next lemma is no doubt ``well-known'', although we do not know of a reference. It can be proved by an elementary argument based on the Cauchy--Schwarz inequality and the corresponding conditions for equality.

\begin{lemma}\label{lem:well--known}
	Let $M$ be an $n\times m$ matrix with columns $\mathbf{c}_1$, $\mathbf{c}_2$, $\ldots$, $\mathbf{c}_m$ such that $\|\mathbf{c}_j\|\leq 1$ for all $j$. Then $\|M\| \leq \sqrt{m}$, with equality if and only if $\|\mathbf{c}_1\|=1$ and there exist scalars $z_1$, $z_2$, $\ldots$, $z_m$ with $|z_j| = 1$ and $\mathbf{c}_j=z_j \mathbf{c}_1$ for all $j$.
\end{lemma}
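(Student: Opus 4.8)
The plan is to bound $\|M\|$ using the bound for each matrix-vector product directly. Take any unit vector $\mathbf{u} = (u_1,\dots,u_m)^\t \in \C^m$. Then $M\mathbf{u} = \sum_{j=1}^m u_j \mathbf{c}_j$, and by the triangle inequality $\|M\mathbf{u}\| \le \sum_{j=1}^m |u_j|\,\|\mathbf{c}_j\| \le \sum_{j=1}^m |u_j|$. Now applying Cauchy--Schwarz to the vectors $(|u_1|,\dots,|u_m|)$ and $(1,\dots,1)$ gives $\sum_{j=1}^m |u_j| \le \sqrt{m}\,\big(\sum_j |u_j|^2\big)^{1/2} = \sqrt{m}$. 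Taking the supremum over unit $\mathbf{u}$ yields $\|M\| \le \sqrt{m}$.

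For the equality analysis, I would trace back through the two inequalities used and record when each is tight. Suppose $\|M\| = \sqrt{m}$; pick a norming unit vector $\mathbf{u}$, so both inequalities above must be equalities for this $\mathbf{u}$. Equality in Cauchy--Schwarz forces $(|u_1|,\dots,|u_m|)$ to be a scalar multiple of $(1,\dots,1)$, hence $|u_j| = 1/\sqrt{m}$ for every $j$; in particular every $u_j \neq 0$. Equality in the triangle inequality $\|\sum_j u_j \mathbf{c}_j\| = \sum_j \|u_j \mathbf{c}_j\|$ forces all the nonzero vectors $u_j \mathbf{c}_j$ to be nonnegative real multiples of a common unit vector; combined with $\|\mathbf{c}_j\| \le 1$ and the fact that we also need $\|u_j\mathbf{c}_j\| = |u_j|$ (to make $\sum_j |u_j|\|\mathbf{c}_j\| = \sum_j |u_j|$, since all $|u_j| > 0$), we get $\|\mathbf{c}_j\| = 1$ for all $j$ and $u_j \mathbf{c}_j = \lambda_j \mathbf{v}$ for a fixed unit vector $\mathbf{v}$ and $\lambda_j \ge 0$. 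Then $\mathbf{c}_j = (\lambda_j/u_j)\mathbf{v}$, and since $\|\mathbf{c}_j\| = 1 = \|\mathbf{v}\|$ we get $|\lambda_j/u_j| = 1$; setting $z_j = \lambda_j/u_j$ gives $|z_j| = 1$ and $\mathbf{c}_j = z_j \mathbf{v} = z_j \mathbf{c}_1/z_1$, and after rescaling the $z_j$'s by $z_1$ we obtain $\mathbf{c}_j = z_j \mathbf{c}_1$ with $|z_j| = 1$ and $z_1 = 1$, and in particular $\|\mathbf{c}_1\| = 1$. Conversely, if $\|\mathbf{c}_1\| = 1$ and $\mathbf{c}_j = z_j \mathbf{c}_1$ with $|z_j| = 1$, then taking $\mathbf{u} = \tfrac{1}{\sqrt m}(\overline{z_1},\dots,\overline{z_m})^\t$ gives $M\mathbf{u} = \sqrt{m}\,\mathbf{c}_1$, so $\|M\| \ge \sqrt m$, and with the upper bound this is equality.

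The main obstacle is being careful in the equality case that the two separate equality conditions (Cauchy--Schwarz and triangle inequality) are handled in the right logical order: one first extracts $|u_j| = 1/\sqrt m > 0$ from Cauchy--Schwarz, and only then can one conclude from the triangle-inequality equality condition that \emph{every} column $\mathbf{c}_j$ (not just those with $u_j \ne 0$) is a unimodular multiple of a common vector, and that each has norm exactly $1$. A minor subtlety is that the triangle-inequality equality condition in a general inner product space needs to be stated correctly (vectors pairwise nonnegative-real-proportional), but over $\C^n$ with the Euclidean norm this is standard. Everything else is routine bookkeeping.
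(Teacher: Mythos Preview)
Your proof is correct and follows exactly the approach the paper indicates: the paper does not actually prove the lemma but remarks that it ``can be proved by an elementary argument based on the Cauchy--Schwarz inequality and the corresponding conditions for equality,'' which is precisely what you do. Your handling of the equality case---first extracting $|u_j|=1/\sqrt{m}$ from Cauchy--Schwarz so that all coordinates are nonzero, and only then invoking the triangle-inequality equality condition---is the right order, and the argument is complete.
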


The following theorem establishes the main connection between Schur norms and Hadamard matrices. In particular, it gives the only upper bound on $r_n$ that we currently know how to compute, with the exception of exact values that we can compute when $n$ is small with extensive computer help (see Section~\ref{sec:compute_rn}).

\begin{theorem}\label{thm:c_c and r_n}
    Let $n \geq 1$ be an integer and let $c_n$ and $r_n$ be as in Equations~\eqref{E3} and~\eqref{E4}, respectively.
    \begin{itemize}
        \item[(1)] $c_n = \sqrt{n}$.
        \item[(2)] $r_n \leq \sqrt{n}$.
        \item[(3)] $r_n=\sqrt{n}$ if and only if there exists an $n\times n$ Hadamard matrix.
    \end{itemize}
\end{theorem}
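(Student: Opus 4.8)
The plan is to treat the upper bounds in (1) and (2) together, then exhibit a matching example for $c_n$, and finally dispatch the two directions of (3), the converse being the substantive part.

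\emph{Upper bounds and part (1).} For any $M\in\P_n(\C)$ and any contraction $C$, write $M\circ C$ by columns: its $j$-th column is $\mathbf{m}_j\circ\mathbf{c}_j$, and since $|m_{ij}|\le 1$ we have $\|\mathbf{m}_j\circ\mathbf{c}_j\|\le\|\mathbf{c}_j\|\le\|C\|\le 1$. Thus $M\circ C$ is an $n\times n$ matrix all of whose columns are contractions, and Lemma~\ref{lem:well--known} gives $\|M\circ C\|\le\sqrt n$; hence $r_n\le c_n\le\sqrt n$, which is (2) and half of (1). For the reverse inequality in (1) I would use the Fourier matrix $F_n=(\omega^{(i-1)(j-1)})\in\P_n(\C)$ with $\omega=e^{2\pi i/n}$, so $F_nF_n^{\ast}=nI$; taking $C=\tfrac1{\sqrt n}\overline{F_n}$, which is unitary, we get $F_n\circ C=\tfrac1{\sqrt n}(F_n\circ\overline{F_n})=\tfrac1{\sqrt n}J$ with $J$ the all-ones matrix, and $\|\tfrac1{\sqrt n}J\|=\sqrt n$, so $c_n\ge\sqrt n$ and therefore $c_n=\sqrt n$. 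The ``if'' direction of (3) is the same computation with a real Hadamard matrix $H$ in place of $F_n$: then $C=\tfrac1{\sqrt n}H$ is orthogonal and $H\circ C=\tfrac1{\sqrt n}J$ has norm $\sqrt n$, so $r_n\ge\|H\|_{\textup S}\ge\sqrt n$, and (2) forces $r_n=\sqrt n$.

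\emph{The converse in part (3).} Suppose $r_n=\sqrt n$ and fix $M\in\P_n(\R)$ with $\|M\|_{\textup S}=\sqrt n$. By Theorem~\ref{thm:witness forms}(2) there is an \emph{orthogonal} witness $C$, so $\|M\circ C\|=\sqrt n$ and every column $\mathbf{c}_j$ of $C$ is a unit vector; combined with $\|\mathbf{m}_j\circ\mathbf{c}_j\|\le\|\mathbf{c}_j\|=1$, the equality case of Lemma~\ref{lem:well--known} applies to $M\circ C$ and tells us that $\mathbf{v}:=\mathbf{m}_1\circ\mathbf{c}_1$ is a unit vector with every column of $M\circ C$ a unimodular multiple of it, i.e. $M\circ C=\mathbf{v}\mathbf{z}^{\t}$ with $|z_j|=1$ for all $j$. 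From here I would argue: (a) $M\circ C$ is real and $\mathbf{v}\neq\mathbf 0$, so $\mathbf{z}$ is real, whence each $z_j=\pm1$; (b) $1=\|z_j\mathbf{v}\|^2=\|\mathbf{m}_j\circ\mathbf{c}_j\|^2=\sum_i m_{ij}^2c_{ij}^2\le\sum_i c_{ij}^2=1$, so equality holds term by term and $m_{ij}=\pm1$ whenever $c_{ij}\neq 0$; (c) if some $v_i=0$ then row $i$ of $M\circ C$ vanishes, which by (b) forces row $i$ of $C$ to vanish, contradicting orthogonality of $C$, so all $v_i\neq 0$, and then $m_{ij}c_{ij}=z_jv_i\neq 0$ gives $c_{ij}\neq 0$, hence $m_{ij}=\pm1$ for \emph{all} $i,j$ and $c_{ij}=m_{ij}z_jv_i$, i.e. $C=D_{\mathbf v}MD_{\mathbf z}$ with $D_{\mathbf v}=\operatorname{diag}(\mathbf v)$ and $D_{\mathbf z}=\operatorname{diag}(\mathbf z)$; and (d) feeding this into $CC^{\t}=I$ and using $D_{\mathbf z}^2=I$ collapses it to $MM^{\t}=D_{\mathbf v}^{-2}$, whereupon comparing diagonal entries, $(MM^{\t})_{ii}=\sum_j m_{ij}^2=n$, forces $D_{\mathbf v}^{-2}=nI$ and hence $MM^{\t}=nI$. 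Since $M$ is a $\pm1$ matrix, it is an $n\times n$ Hadamard matrix, completing (3).

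\emph{Where the difficulty lies.} The only place that genuinely needs care is step (c): the equality analysis must use that the witness is \emph{orthogonal}, not merely a contraction --- once to guarantee each $\|\mathbf{c}_j\|=1$ so that the equality clause of Lemma~\ref{lem:well--known} is available, and once to forbid a zero row of $C$. Granting this, the rest (reality of $\mathbf{z}$, the rank-one factorization, and the diagonal computation in (d)) is routine. A pleasant by-product of this argument is that the real matrices attaining $\|M\|_{\textup S}=\sqrt n$ are exactly the $n\times n$ Hadamard matrices, each witnessed by $\tfrac1{\sqrt n}M$.
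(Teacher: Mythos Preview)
Your proof is correct. The route diverges from the paper's in a few places worth noting. For the upper bound the paper uses the Frobenius norm chain $\|M\circ B\|\le\|M\circ B\|_{\textup F}\le\|B\|_{\textup F}\le\sqrt n\,\|B\|$, whereas you invoke Lemma~\ref{lem:well--known} directly on the columns of $M\circ C$; these are essentially the same estimate packaged differently. The real difference is in the converse of (3). The paper first passes to an extreme point of $\P_n(\R)$ by convexity, so that $M\in\Q_n(\R)$ from the outset, and then applies Lemma~\ref{lem:well--known} \emph{twice}---once to $M\circ B$ and once to $M^{\t}\circ B^{\t}$---to force the entries of the orthogonal witness $B$ to have constant modulus along rows and along columns, concluding that $\sqrt n\,B$ is Hadamard. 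You instead stay with an arbitrary $M\in\P_n(\R)$, apply Lemma~\ref{lem:well--known} once to obtain the rank-one factorization $M\circ C=\mathbf v\mathbf z^{\t}$, and then use $CC^{\t}=I$ algebraically (via $C=D_{\mathbf v}MD_{\mathbf z}$) to force $MM^{\t}=nI$. Your argument is a bit more hands-on but buys two things: it avoids the separate convexity reduction to $\Q_n(\R)$ (you recover $m_{ij}=\pm1$ from the equality analysis), and it identifies the maximizer $M$ itself as Hadamard rather than the rescaled witness---so you get the characterization of extremizers that you flag as a ``pleasant by-product'', which the paper's argument does not state directly.
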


\begin{proof}
    Let $\|M\|_{\textup{F}}$ denote the Frobenius (or Hilbert--Schmidt) norm of the $n\times n$ matrix $M$, i.e.,
	$$
	    \|M\|_{\textup{F}}=\sqrt{\sum_{i,j=1}^n |m_{i,j}|^2}.
	$$
	It is well-known that $\|M\|\leq \|M\|_{\textup{F}}\leq \sqrt{n}\|M\|$. Thus, if $M\in \P_n(\C)$ (or $\P_n(\R)$), we have
	\begin{align}\label{eq:frob_norm_reduc}
	    \|M\circ B\|\leq \|M\circ B\|_{\textup{F}} \leq \|B\|_{\textup{F}} \leq \sqrt{n}\|B\|,
	\end{align}
	which proves $r_n \leq \sqrt{n}$ (i.e., (2)) and $c_n \leq \sqrt{n}$.
	
	To prove~(1), we need to find a matrix in $\P_n(\C)$ with Schur norm at least $\sqrt{n}$. To this end, let $F_n\in \P_n(\C)$ denote the matrix whose $(i,j)$-th entry is $\omega^{ij}$, where $\omega=e^{2\pi i/n}$; this matrix is often called the ($n$-dimensional) Fourier matrix. Notice that the columns of $F_n$ are mutually orthogonal and each have norm $\sqrt{n}$. It follows that $B = \overline{F_n}/\sqrt{n}$ is	unitary and thus has $\|B\| = 1$. Furthermore, $F_n\circ B$ is the matrix with every entry equal to $1/\sqrt{n}$, so $\|F_n\|_{\textup{S}} \geq \|F_n\circ B\| = \sqrt{n}$ (Lemma~\ref{lem:well--known} could be invoked here, for example). This completes the proof of~(1).
	
	To prove~(3), let $H_n$ be an $n\times n$ Hadamard matrix. Then $H_n\in\P_n(\R)$ and we see that $\|H_n\|_{\textup{S}}=\sqrt{n}$ by the same argument that we have used for $F_n$ above. All that remains is to show that if $r_n=\sqrt{n}$ then there exists an $n\times n$ Hadamard matrix. Note first that the convexity of the Schur norm ensures that among the $M\in\P_n(\R)$ such that $\|M\|_{\textup{S}} = r_n$, we may choose $M$ to be an extreme point in the convex set $\P_n(\R)$. That is, if $\Q_n(\R)$ denotes the set of $n \times n$ matrices with entries equal to $\pm 1$, then
    \begin{align}\label{eq:rn_reduced}
    	r_n = \max\{\|M\|_{\textup{S}} : M\in\Q_n(\R)\}.
    \end{align}
    
    We then know from Theorem~\ref{thm:witness forms} that there exists an orthogonal $B \in \mathbb{M}_n(\mathbb{R})$ such that $\sqrt{n} = r_n = \|M\|_{\textup{S}} = \|M \circ B\|$. Since the columns of $A := M \circ B$ all have norm~$1$, Lemma~\ref{lem:well--known} tells us that the entries of $B$ do not vary in modulus along rows. Since the transpose $B^T$ is a witness for $M^T$, the entries of $B$ are also invariant in modulus along columns. We thus conclude that $B$ is a real orthogonal matrix whose entries are $\pm 1/\sqrt{n}$ so that $\sqrt{n}B$ is Hadamard. This completes the proof of (3). 
\end{proof}

It is perhaps worth noting that Bhatia, Choi, and Davis \cite[Proposition~3.1]{BCD89} showed that multiples of the Frobenius norm are the only unitarily invariant matrix norms that have the norm-reducing property of Inequality~\eqref{eq:frob_norm_reduc}.

We could also consider Schur norms for rectangular $n\times m$ matrices $M$, and define, in the obvious way, the quantities $c_{n,m}$ and $r_{n,m}$. Since we have $\|M\|\leq\|M\|_{\textup{F}}\leq\min\{\sqrt{n},\sqrt{m}\}\|M\|$, we could argue, as in the proof above, that
$$
    r_{n,m} \leq c_{n,m} = \min\{\sqrt{n},\sqrt{m}\}.
$$

\section{Schur Spectra and Inequivalent Matrices}\label{sec:spectra}

In the proof of Theorem~\ref{thm:c_c and r_n}, we introduced the set
\[
    \Q_n(\R) \defeq \{M\in\dM_n(\R):m_{i,j} \in \{-1,1\} \ \text{for all} \ 1 \leq i,j \leq n\}.
\]
Its importance comes from Equation~\eqref{eq:rn_reduced}, which says that we can compute $r_n$ just by finding which member of this (finite!) set has largest Schur norm. We use the fanciful term \emph{$n$-th Schur spectrum} for the set of \emph{all} Schur norms of members of $\Q_n(\R)$:
$$
    \sigma_n \defeq \{\|M\|_{\textup{S}} : M\in\Q_n(\R)\}.
$$
This set $\sigma_n$ is a finite collection of real numbers between $1$ and $\sqrt{n}$ (inclusive). We now present some other basic properties of $\sigma_n$. In the following theorem, condition~(1) merely restates our observation about $\Q_n(\R)$ from the proof of Theorem~\ref{thm:c_c and r_n}.

\begin{theorem}\label{thm:spectra}
    \begin{itemize}
        \item[(1)] $r_n = \max \sigma_n$.
        \item[(2)] $\sigma_n\subseteq\sigma_{n+1}$.
        \item[(3)] $\sigma_n\sigma_m\subseteq\sigma_{nm}$, where $\sigma_n\sigma_m := \{xy : x \in \sigma_n, y \in \sigma_m\}$.
    \end{itemize}
\end{theorem}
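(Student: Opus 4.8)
The plan is to handle the three parts separately, with (1) being immediate, (2) requiring a padding argument, and (3) requiring a tensor-product (Kronecker) construction. Part (1) is nothing more than the restatement of Equation~\eqref{eq:rn_reduced}: $r_n$ is by definition the maximum Schur norm over $\P_n(\R)$, and convexity of the Schur norm (together with the fact that $\P_n(\R)$ is the convex hull of $\Q_n(\R)$) pushes the maximizer to an extreme point, i.e.\ into $\Q_n(\R)$, so $r_n=\max\sigma_n$.

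For part (2), given $M\in\Q_n(\R)$ I would embed it into an $(n+1)\times(n+1)$ sign matrix $\widetilde M$ that has $M$ in the top-left block, $1$'s down the rest of the last column and last row, and a $1$ in the bottom-right corner (any fixed sign pattern for the border should work). The claim to verify is $\|\widetilde M\|_{\textup{S}}\geq\|M\|_{\textup{S}}$; combined with $\|\widetilde M\|_{\textup{S}}\le r_{n+1}$ this does not immediately give what we want, so instead I want the \emph{exact} statement that $\|M\|_{\textup{S}}\in\sigma_{n+1}$, which means exhibiting \emph{some} $(n+1)\times(n+1)$ sign matrix whose Schur norm equals $\|M\|_{\textup{S}}$ exactly. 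The natural candidate is the block-diagonal matrix $M\oplus[1]$: padding by a $1\times 1$ identity block. Since $\|(M\oplus[1])\circ(C\oplus D)\| = \max\{\|M\circ C\|,|D|\}$ for the block-diagonal contraction $C\oplus D$, and since an optimal witness can be taken block-diagonal (the off-diagonal blocks of the contraction only hurt, by a direct argument or by noting the Schur product with $M\oplus[1]$ kills cross terms), we get $\|M\oplus[1]\|_{\textup{S}}=\max\{\|M\|_{\textup{S}},1\}=\|M\|_{\textup{S}}$ because every Schur norm is at least $1$. Hence $\|M\|_{\textup{S}}\in\sigma_{n+1}$, giving $\sigma_n\subseteq\sigma_{n+1}$. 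The one point needing care is justifying that the optimal witness for $M\oplus[1]$ may be chosen block-diagonal; I expect this follows because for any contraction $C$, replacing its off-diagonal blocks by zero yields a matrix that is still a contraction (it is a compression composed with a pinching) and does not decrease $\|(M\oplus[1])\circ C\|$ since that Schur product already annihilates the off-diagonal blocks.

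For part (3), take $M\in\Q_n(\R)$ and $N\in\Q_m(\R)$ with Schur-norm witnesses $B\in\dM_n(\R)$ and $D\in\dM_m(\R)$, which by Theorem~\ref{thm:witness forms}(2) may be taken real orthogonal; so $\|M\circ B\|=\|M\|_{\textup{S}}$ and $\|N\circ D\|=\|N\|_{\textup{S}}$. Consider the Kronecker products $M\otimes N\in\Q_{nm}(\R)$ and $B\otimes D$, which is orthogonal, hence a contraction. The key identity is $(M\otimes N)\circ(B\otimes D) = (M\circ B)\otimes(N\circ D)$, valid because the $((i,k),(j,\ell))$ entry of a Kronecker product is the product of the $(i,j)$ entry of the first factor and the $(k,\ell)$ entry of the second, so entrywise multiplication factors through the tensor. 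Using the multiplicativity of the operator norm under Kronecker products, $\|(M\circ B)\otimes(N\circ D)\| = \|M\circ B\|\,\|N\circ D\| = \|M\|_{\textup{S}}\|N\|_{\textup{S}}$. Therefore $\|M\otimes N\|_{\textup{S}}\geq \|M\|_{\textup{S}}\|N\|_{\textup{S}}$, and since $M\otimes N\in\Q_{nm}(\R)$ this shows $\|M\|_{\textup{S}}\|N\|_{\textup{S}}\in\sigma_{nm}$. As $M,N$ range over $\Q_n(\R),\Q_m(\R)$ we get $\sigma_n\sigma_m\subseteq\sigma_{nm}$.

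The main obstacle, such as it is, lies in part (2): parts (1) and (3) are essentially bookkeeping once the right constructions are written down, but in (2) one must be slightly careful that the padded matrix's Schur norm does not accidentally \emph{exceed} $\|M\|_{\textup{S}}$ — this is exactly why the block-diagonal padding $M\oplus[1]$ is preferable to a fully-filled border, and why the block-diagonality of the optimal witness is the crux. If one only wanted the weaker inclusion up to the value $r_n$ this would be trivial, but for the set-theoretic containment $\sigma_n\subseteq\sigma_{n+1}$ the exact equality $\|M\oplus[1]\|_{\textup{S}}=\|M\|_{\textup{S}}$ is what must be nailed down.
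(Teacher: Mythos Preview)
Your proposal has genuine gaps in both (2) and (3).

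\textbf{Part (2).} The matrix $M\oplus[1]$ has zeros in its off-diagonal blocks, so it is \emph{not} an element of $\Q_{n+1}(\R)$, which by definition consists of matrices with every entry equal to $\pm 1$. Thus the (correct) computation $\|M\oplus[1]\|_{\textup S}=\|M\|_{\textup S}$ tells you nothing about $\sigma_{n+1}$. You were right to worry that a full $\pm 1$ border might raise the Schur norm; the whole difficulty is to produce a genuine sign matrix of size $n+1$ whose Schur norm is \emph{exactly} $\|M\|_{\textup S}$. The paper does this via the Grothendieck--Haagerup factorization: writing $M=RC$ with $\|R\|_{\textup r}\|C\|_{\textup c}=\|M\|_{\textup S}$, one repeats a row of $R$ and a column of $C$ to obtain $R_{+}C_{+}\in\Q_{n+1}(\R)$ (the border consists of repeated entries of $M$, hence is $\pm 1$). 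Then Inequality~\eqref{E5} gives $\|R_{+}C_{+}\|_{\textup S}\le \|R\|_{\textup r}\|C\|_{\textup c}=\|M\|_{\textup S}$, while the submatrix argument gives the reverse inequality. Without this factorization trick (or something equivalent), bounding the Schur norm of a $\pm 1$ bordered matrix from \emph{above} is not obvious.

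\textbf{Part (3).} You establish $\|M\otimes N\|_{\textup S}\ge \|M\|_{\textup S}\|N\|_{\textup S}$ and then write ``this shows $\|M\|_{\textup S}\|N\|_{\textup S}\in\sigma_{nm}$.'' That inference is invalid: what you know is $\|M\otimes N\|_{\textup S}\in\sigma_{nm}$, and without the reverse inequality there is no reason the product $\|M\|_{\textup S}\|N\|_{\textup S}$ itself lies in $\sigma_{nm}$. You need the \emph{equality} $\|M\otimes N\|_{\textup S}=\|M\|_{\textup S}\|N\|_{\textup S}$, which the paper simply cites (e.g.\ \cite[Proposition~1]{Hla99}). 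The missing direction $\|M\otimes N\|_{\textup S}\le \|M\|_{\textup S}\|N\|_{\textup S}$ can be obtained, for instance, by tensoring Haagerup factorizations of $M$ and $N$ and using that row (resp.\ column) norms are multiplicative under Kronecker products; but some such argument must be supplied.

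Part (1) is fine and matches the paper.
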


\begin{proof}
    (1) The convex function $f(M):=\|M\|_{\textup{S}}$ on $\P_n(\R)$ attains its maximum at some extreme point, i.e., some element of $\Q_n(\R)$.
    
    (2) Consider $x\in\sigma_n$ and let $M\in\Q_n(\R)$ be such that $x=\|M\|_{\textup{S}}$. By the equality form of Inequality~\eqref{E5} (i.e., the Grothendieck--Haagerup result discussed in Section~\ref{sec:schur_norms}), there exist $n\times n$ matrices $R$ and $C$ such that $M=RC$ and $x=\|R\|_{\textup{r}}\|C\|_{\textup{c}}$. Let $R_{+}$ be the $(n+1)\times n$ matrix obtained by repeating the top row of $R$, and let $C_{+}$ be the $n\times(n+1)$ matrix obtained by repeating the first column of $C$. Then $R_{+}C_{+}$ is $M$ bordered on the top and left by certain elements of $M$, so $R_{+}C_{+}\in \Q_{n+1}(\R)$. Inequality~\eqref{E5} shows that $\|R_{+}C_{+}\|_{\textup{S}}\leq\|R_{+}\|_{\textup{r}}\|C_{+}\|_{\textup{c}} = \|R\|_{\textup{r}}\|C\|_{\textup{c}}=x$. On the other hand, $R_{+}C_{+}$ has $M$ as a 
    submatrix, so  $\|R_{+}C_{+}\|_{\textup{S}}\geq\|M\|_{\textup{S}}=x$, which can be seen by augmenting any witness for $\|M\|_{\textup{S}}$ with a row and column of zeros. It follows that $\|R_{+}C_{+}\|_{\textup{S}} = x$, so $x \in \sigma_{n+1}$.
    
    (3) Let $x\in\sigma_n$ and $y\in\sigma_m$ and suppose $A\in\Q_n(\R)$ and $B\in\Q_m(\R)$ are such that $x=\|A\|_{\textup{S}}$ and $y=\|B\|_{\textup{S}}$. The Kronecker product $A\otimes B$ is an element of $\Q_{nm}(\R)$ and it is known that $\|A\otimes B\|_{\textup{S}}=\|A\|_{\textup{S}}\|B\|_{\textup{S}}$ (see \cite[Proposition~1]{Hla99}, for example). It follows that $xy\in\sigma_{nm}$.
\end{proof}

For small values of $n$, we are able to compute $\sigma_n$ explicitly, with help from computer software. In particular, we are able to give an exact analytic description of $\sigma_n$ when $n \leq 5$, and we are able to provide numerical approximations of all members of $\sigma_n$ when $n \leq 7$. The next subsection explains how these computations were performed.

\subsection{(In)Equivalent Matrices}\label{sec:equiv_mat}

Because $\Q_n(\R)$ and the operator norm are invariant under the operations of changing the sign of a row or column, permuting rows or columns, and transposition, it is straightforward to see that the Schur norm is also invariant under these operations. We say that elements of $\Q_n(\R)$ related by such operations are \emph{equivalent}, and we note that this really does define an equivalence relation, and thus a partition of $\Q_n(\R)$ into disjoint equivalence classes. To compute $r_n$ or $\sigma_n$, it suffices to compute the Schur norm of just a single member of each of these equivalence classes.

Since we are free to multiply each row and column of a member of $\Q_n(\R)$ by $-1$ without changing its equivalence class, we know that each equivalence class contains a matrix whose top row and left column consists entirely of ones. We typically (but not always) work with representatives of equivalences classes that have this form.

With the help of computer software, the equivalence classes for $n \leq 5$ can be enumerated straightforwardly by brute force computation involving all members of $\Q_n(\R)$. We can extend this enumeration to $n = 6$ and $n = 7$ by working recursively as follows: to find the equivalence classes in $\Q_{n+1}(\R)$, add a single extra row and column of $\pm 1$ entries to a single representative for each equivalence class in $\Q_{n}(\R)$ in all possible ways. We can further reduce the amount of computation required by noting that matrices in the same equivalence class have the same singular values as each other (though the converse does not hold, as evidenced by the $5 \times 5$ matrices
\[
    \begin{bmatrix}
         1 &    1 &    1 &    1 &    1 \\
         1 &    1 &   -1 &   -1 &   -1 \\
         1 &   -1 &   -1 &   -1 &    1 \\
         1 &   -1 &   -1 &   -1 &   -1 \\
         1 &   -1 &    1 &    1 &   -1
    \end{bmatrix} \quad \text{and} \quad \begin{bmatrix}
         1 &    1 &    1 &    1 &    1 \\
         1 &    1 &   -1 &   -1 &   -1 \\
         1 &   -1 &   -1 &   -1 &    1 \\
         1 &   -1 &   -1 &   -1 &    1 \\
         1 &   -1 &    1 &    1 &   -1
    \end{bmatrix},
\]
which are inequivalent but have the same singular values). This results in far fewer matrices and equivalences to check than just directly looping over all of $\Q_{n+1}(\R)$. In particular, for $n = 1, 2, \ldots, 7$, the MATLAB code that we provide at \cite{SuppCode} shows that there are $1$, $2$, $3$, $10$, $30$, $242$, $4386$ equivalence classes inside $\Q_n(\R)$ \cite{oeisA353052}, which gives an upper bound on the number of members of $\sigma_n$.

\subsection{Small Schur Spectra}\label{sec:small_schur_spectra}

For $n=2$, there are only two equivalence classes in $\Q_2(\R)$, which contain the matrices
\begin{align}\label{eq:M_mat_r2}
    M_1=\begin{bmatrix}1&1\\1&1\end{bmatrix} \quad \text{and} \quad M_2=\begin{bmatrix}1&1\\1&-1\end{bmatrix}.
\end{align}
It is clear that $M_1$ (indeed, a matrix of any size whose entries all equal $1$) has Schur norm $1$, and $M_2$ is Hadamard and thus has Schur norm $\sqrt{2}$. It follows that $\sigma_2=\{1,\sqrt{2}\}$, so $r_2 = \sqrt{2}$.

When $n=3$, the three equivalence classes in $\Q_3(\R)$ can be represented by the matrices
\begin{align}\label{eq:M_mat_r3}
    M_1= \begin{bmatrix}1&1&1\\1&1&1\\1&1&1\end{bmatrix}, \quad M_2=\begin{bmatrix}1&1&-1\\-1&1&1\\1&-1&1\end{bmatrix}, \quad \text{and} \quad M_3=\begin{bmatrix}1&1&1\\1&-1&-1\\1&-1&-1\end{bmatrix}.
\end{align}
Since $M_2$ is a circulant, the formula~\eqref{E6} tells us that $\|M_2\|_{\textup{S}} = 5/3$. We claim that $\|M_3\|_{\textup{S}}=\sqrt{2}$. To see this, we note that $\|M_3\|_{\textup{S}} \geq \sqrt{2}$ since $M_3$ has the $2\times 2$ Hadamard matrix as a submatrix, while the factorization
\begin{align}\label{eq:M3_norm}
 M_3=\begin{bmatrix}1&0&0\\0&1&0\\0&1&0\end{bmatrix} \begin{bmatrix}1&1&1\\1&-1&-1\\0&0&0\end{bmatrix}
\end{align}
shows that $\|M_3\|_{\textup{S}}\leq\sqrt{2}$ via Inequality~\eqref{E5}. It follows that $\sigma_3=\{1,\sqrt{2},5/3\}$, so $r_3=5/3$.

For larger values of $n$, computing $\sigma_n$ becomes significantly more complicated (though we already know that $r_4 = 2$ since there is a $4 \times 4$ Hadamard matrix). For example, when $n=4$ there are $10$ equivalence classes in $\Q_4(\R)$, represented by the matrices
\begin{align*}
    M_1 & = \begin{bmatrix}
        1 & 1 & 1 & 1 \\
        1 & 1 & 1 & 1 \\
        1 & 1 & 1 & 1 \\
        1 & 1 & 1 & 1
    \end{bmatrix}, & M_2 & = \begin{bmatrix}1 & 1 & -1 & -1 \\
    -1 & 1 & 1 & -1 \\
    -1 & -1 & 1 & 1 \\
     1 & -1 & -1 & 1\end{bmatrix}, & M_3 & = \begin{bmatrix}1 & 1 & 1 & 1\\
    1 & 1 & -1 & -1\\
    1 & -1 & 1 & -1\\
    1 & -1 & -1 & 1\end{bmatrix},\\
    M_4 & = \begin{bmatrix}1 & 1 & 1 & 1\\
    1 & -1 & 1 & 1\\
    1 & -1 & -1 & -1\\
    1 & -1 & -1 & -1\end{bmatrix}, & M_5 & = \begin{bmatrix}1 & 1 & 1 & 1\\
    1 & -1 & -1 & 1\\
    1 & -1 & -1 & -1\\
    1 & -1 & -1 & -1\end{bmatrix}, & M_6 & = \begin{bmatrix}1 & 1 & 1 & 1\\
    1 & -1 & -1 & -1\\
    1 & -1 & -1 & -1\\
    1 & 1 & 1 & 1\end{bmatrix},\\
    M_7 & = \begin{bmatrix}1 & 1 & 1 & 1\\
    1 & -1 & -1 & -1\\
    1 & -1 & -1 & -1\\
    1 & -1 & -1 & -1\end{bmatrix}, & M_8 & = \begin{bmatrix}1 & 1 & 1 & 1\\
    1 & 1 & -1 & 1\\
    1 & -1 & 1 & 1\\
    1 & -1 & -1 & -1\end{bmatrix}, & M_9 & = \begin{bmatrix}1 & 1 & 1 & 1\\
    1 & -1 & -1 & 1\\
    1 & -1 & 1 & 1\\
    1 & -1 & -1 & -1\end{bmatrix},\\
    M_{10} & = \begin{bmatrix}1 & 1 & 1 & 1\\
    1 & 1 & -1 & -1\\
    1 & -1 & 1 & 1\\
    1 & -1 & -1 & -1\end{bmatrix}.
\end{align*}

It is straightforward to see that $\|M_1\|_{\textup{S}} = 1$, $M_2$ is circulant so Equation~\eqref{E6} tells us that $\|M_{2}\|_{\textup{S}} = \sqrt{2}$, and $M_3$ is Hadamard so $\|M_3\|_{\textup{S}} = 2$. Similarly, $M_4$ and $M_5$ are the same (up to equivalences) as the $3 \times 3$ matrix $M_2$ from Equation~\eqref{eq:M_mat_r3}, but with a repeated row and column, so $\|M_{4}\|_{\textup{S}} = \|M_{5}\|_{\textup{S}} = 5/3$, and $M_6$ and $M_7$ are just the $2 \times 2$ Hadamard matrix with repeated rows and columns, so $\|M_{6}\|_{\textup{S}} = \|M_{7}\|_{\textup{S}} = \sqrt{2}$. It's perhaps worth noting that these are the smallest examples to illustrate that members of different equivalence classes in $\Q_n(\R)$ can have the same Schur norm as each other.

However, we have not yet seen any methods that can effectively compute the Schur norms of $M_8$, $M_9$, or $M_{10}$. We solve this problem in Section~\ref{sec:compute_schur}, where we develop an algorithm for efficiently computing the Schur norm of any matrix. In particular, that method shows that
\begin{align*}
    \|M_8\|_{\textup{S}} = (2 + 3\sqrt{6})/5, \quad \|M_9\|_{\textup{S}} = \sqrt{2 + \sqrt{2}}, \quad \text{and} \quad \|M_{10}\|_{\textup{S}} = \sqrt{3},
\end{align*}
so we conclude that $\sigma_4 = \big\{1, \sqrt{2}, 5/3, \sqrt{3}, \sqrt{2 + \sqrt{2}}, (2 + 3\sqrt{6})/5, 2\big\}$.

Since we have computed all equivalence classes in $\Q_n(\R)$ for $n \leq 7$, we could in principal compute $\sigma_5$, $\sigma_6$, and $\sigma_7$ as well. However, this becomes somewhat tricky as $n$ increases, since some of the members of these Schur spectra are difficult to describe analytically. For example, $\sigma_5$ contains $16$ members:
\[
    \sigma_5 = \sigma_4 \cup \big\{(1+4\sqrt{5})/5, (3+8\sqrt{2})/7, 11/5, 1.9093, 1.9621, 2.0130, 2.0276, 2.0591, 2.1343\big\},
\]
but some of these members are just numerical approximations that do not admit nice closed-form expressions. For $n = 6$, we find that the Schur spectrum $\sigma_6$ consists of the following $87$ numbers, to $8$ decimal places of accuracy:
\begin{center}\begin{multicols}{5}
\noindent 1.00000000\\
1.41421356\\
1.66666667\\
1.73205081\\
1.84775907\\
1.86969385\\
1.90934903\\
1.94365063\\
1.96211651\\
1.98885438\\
2.00000000\\
2.01298411\\
2.02756977\\
2.04285840\\
2.04481550\\
2.05198924\\
2.05872925\\
2.05907464\\
2.06472827\\
2.07214975\\
2.08166600\\
2.08479890\\
2.09039387\\
2.09716754\\
2.10439768\\
2.10832063\\
2.11335055\\
2.11745179\\
2.12731473\\
2.13133319\\
2.13264561\\
2.13425873\\
2.13435585\\
2.14421623\\
2.14802944\\
2.15012159\\
2.15083953\\
2.15300969\\
2.15470054\\
2.15493373\\
2.15500794\\
2.15802279\\
2.16028453\\
2.16238205\\
2.16334817\\
2.16890954\\
2.17206182\\
2.17399409\\
2.17546577\\
2.17827029\\
2.17841264\\
2.17849279\\
2.18257226\\
2.18426204\\
2.18624046\\
2.18961912\\
2.19315903\\
2.20000000\\
2.20126537\\
2.20224912\\
2.20355424\\
2.20374983\\
2.20896701\\
2.21177503\\
2.22152260\\
2.22832822\\
2.23579713\\
2.23606798\\
2.23619478\\
2.24319201\\
2.24710210\\
2.24771457\\
2.24919207\\
2.25037290\\
2.25756011\\
2.26081512\\
2.26575640\\
2.27157284\\
2.28263992\\
2.30096841\\
2.30894113\\
2.31264921\\
2.31509300\\
2.33333333\\
2.35566250\\
2.35702260\\
2.38742589.
\end{multicols}\end{center}
For $n = 7$, the Schur spectrum $\sigma_7$ contains approximately $1560$ different numbers, many of which are extremely close to each other. We do not list them here.

\section{Lower Bounds on $r_n$}\label{sec:lower_bounds}

We showed in Theorem~\ref{thm:c_c and r_n} that $r_n \leq \sqrt{n}$, with equality if and only if an $n \times n$ Hadamard matrix exists. We now prove some lower bounds that show that $r_n$ cannot be very far below $\sqrt{n}$.

\begin{lemma}\label{lem:rn_nondec}
    The sequence $\{r_n\}$ is non-decreasing.
\end{lemma}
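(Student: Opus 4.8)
The plan is to show $r_n \leq r_{n+1}$ for every $n \geq 1$ by producing, for any $M \in \Q_n(\R)$ with $\|M\|_{\textup{S}} = r_n$, a matrix $M_+ \in \Q_{n+1}(\R)$ with $\|M_+\|_{\textup{S}} \geq \|M\|_{\textup{S}}$. Combined with Theorem~\ref{thm:spectra}(1) (which gives $r_n = \max\sigma_n$), this immediately yields the claim, since $\|M_+\|_{\textup{S}} \leq r_{n+1}$. In fact, the containment $\sigma_n \subseteq \sigma_{n+1}$ already established as Theorem~\ref{thm:spectra}(2) does all the work: taking maxima over the two sides of $\sigma_n \subseteq \sigma_{n+1}$ gives $r_n = \max\sigma_n \leq \max\sigma_{n+1} = r_{n+1}$.

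So the cleanest route is simply to invoke Theorem~\ref{thm:spectra}(1) and~(2). If instead one wants a self-contained argument, the key step is the ``bordering'' construction from the proof of Theorem~\ref{thm:spectra}(2): write $M = RC$ achieving equality in Schur's inequality~\eqref{E5} via the Grothendieck--Haagerup theorem, so $\|M\|_{\textup{S}} = \|R\|_{\textup{r}}\|C\|_{\textup{c}}$; form $R_+$ by duplicating the top row of $R$ and $C_+$ by duplicating the first column of $C$; then $M_+ := R_+C_+ \in \Q_{n+1}(\R)$ has $\|M_+\|_{\textup{S}} \leq \|R_+\|_{\textup{r}}\|C_+\|_{\textup{c}} = \|R\|_{\textup{r}}\|C\|_{\textup{c}} = \|M\|_{\textup{S}}$, while $\|M_+\|_{\textup{S}} \geq \|M\|_{\textup{S}}$ because $M$ sits as a submatrix of $M_+$ (pad any witness for $M$ with a zero row and column). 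Hence $\|M_+\|_{\textup{S}} = \|M\|_{\textup{S}} = r_n$, and since $M_+ \in \Q_{n+1}(\R)$ we get $r_n \leq r_{n+1}$.

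There is essentially no obstacle here: the lemma is a direct corollary of results already proved in the excerpt, and the only thing to be careful about is citing the right ingredients (Theorem~\ref{thm:spectra}, or equivalently the Grothendieck--Haagerup equality in~\eqref{E5} together with the submatrix observation). I would present the one-line proof from Theorem~\ref{thm:spectra}(1)--(2) as the main argument, perhaps remarking that it also follows directly from the bordering construction.
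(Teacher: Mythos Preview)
Your proposal is correct. Your primary argument (invoke Theorem~\ref{thm:spectra}(1)--(2) and take maxima) is valid, and indeed the paper remarks immediately after its own proof that the lemma is also a corollary of Theorem~\ref{thm:spectra}(2).

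That said, the paper's main proof is even simpler than either of your routes: it works in $\P_n(\R)$ rather than $\Q_n(\R)$. Given $A\in\P_n(\R)$ with $\|A\|_{\textup{S}}=r_n$, pad $A$ with a row and column of \emph{zeros} to get $\widetilde{A}\in\P_{n+1}(\R)$; then $\|\widetilde{A}\|_{\textup{S}}=\|A\|_{\textup{S}}$ (pad a witness with zeros in one direction, note that a zero row/column of $\widetilde{A}$ kills the extra row/column of any witness in the other direction), so $r_{n+1}\geq r_n$. Your self-contained argument, by insisting on producing an $M_+\in\Q_{n+1}(\R)$, is forced into the Grothendieck--Haagerup bordering of Theorem~\ref{thm:spectra}(2); the paper sidesteps this entirely by exploiting the fact that $r_n$ is defined as a max over $\P_n(\R)$, where zero entries are permitted. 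Both approaches are fine, but the zero-padding is the more economical one-liner.
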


\begin{proof}
    If $A \in \mathbb{P}_n(\mathbb{R})$ is such that $r_n = \|A\|_{S}$ then we can let $\widetilde{A} \in \mathbb{P}_{n+1}(\mathbb{R})$ be the matrix obtained by padding $A$ with a row and column of zeroes, giving $r_{n+1} \geq \|\widetilde{A}\|_{\textup{S}} = \|A\|_{\textup{S}} = r_n$.
\end{proof}

In fact, the above lemma is also a corollary of condition~(2) of Theorem~\ref{thm:spectra}. When we combine this monotonicity of $\{r_n\}$ with the fact that $r_n = \sqrt{n}$ whenever an $n \times n$ Hadamard matrix exists, we get some simple lower bounds on $r_n$. For example, since there is a $4 \times 4$ Hadamard matrix, we know that $r_4 = \sqrt{4} = 2$, so $r_7 \geq r_6 \geq r_5 \geq r_4 = 2$.

To get a slightly more general lower bound on $r_n$, recall that Sylvester's construction \cite{Syl67} shows that there exist Hadamard matrices of size $m \times m$, and hence $r_m = \sqrt{m}$, whenever $m$ is a power of $2$. Given any $n \in \mathbb{N}$, there exists such an $m$ satisfying $n/2 \leq m \leq n$, so it follows that
\begin{align}\label{eq:sylvester_bound}
    r_{n} \geq r_m = \sqrt{m} \geq \sqrt{n/2}.
\end{align}
A slightly more careful analysis along these lines leads to the following better lower bound on $r_n$:

\begin{theorem}\label{thm:fn_lowerbound}
    If $n \in \mathbb{N}$ and $p$ is a prime number satisfying $p \leq n/2 - 1$, then $r_n \geq \sqrt{2(p+1)}$. In particular,
    \[
        \lim_{n\rightarrow\infty} \frac{r_n}{\sqrt{n}} = 1.
    \]
\end{theorem}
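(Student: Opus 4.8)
The plan is to build, for any $n$, a $\pm1$ matrix of size $n\times n$ whose Schur norm is at least $\sqrt{2(p+1)}$, using a known $2(p+1)\times 2(p+1)$ Hadamard matrix together with padding. The starting point is the Paley construction: if $p$ is an odd prime (the case $p=2$, giving size $6$, we can handle separately or absorb into the Sylvester bound~\eqref{eq:sylvester_bound}), then there is a Hadamard matrix of order $p+1$ when $p\equiv 3\pmod 4$ and of order $2(p+1)$ when $p\equiv 1\pmod 4$; in either case, combining with the $2\times2$ Hadamard matrix via the Kronecker product (which multiplies Schur norms, by condition~(3) of Theorem~\ref{thm:spectra}), we obtain a Hadamard matrix $H$ of order $m:=2(p+1)$, hence $r_m=\sqrt{m}=\sqrt{2(p+1)}$. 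Since $p\le n/2-1$ is exactly the statement $m=2(p+1)\le n$, the monotonicity of $\{r_n\}$ from Lemma~\ref{lem:rn_nondec} immediately gives $r_n\ge r_m=\sqrt{2(p+1)}$.

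For the limit statement, I would invoke Bertrand's postulate (or any effective version of the prime number theorem): for large $n$ there is a prime $p$ with $n/4 \le p \le n/2-1$. Feeding this $p$ into the bound just established yields
\[
    \sqrt{n} \;\ge\; r_n \;\ge\; \sqrt{2(p+1)} \;\ge\; \sqrt{2\cdot(n/4)} \;=\; \sqrt{n/2},
\]
which already shows $r_n/\sqrt n$ is bounded below by $1/\sqrt2$ — but to get the limit $1$ we need $p$ to be close to $n/2-1$, not merely a constant fraction of it. So the refined input is: for every $\varepsilon>0$, for all sufficiently large $n$ there is a prime $p$ in the interval $[(1-\varepsilon)(n/2-1),\,n/2-1]$, which follows from the prime number theorem (the interval $[(1-\varepsilon)x,x]$ contains a prime for large $x$). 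Then $r_n \ge \sqrt{2(p+1)} \ge \sqrt{2(1-\varepsilon)(n/2-1)+2} = \sqrt{(1-\varepsilon)n - (1-\varepsilon)\cdot 2 + 2}$, so $r_n/\sqrt n \ge \sqrt{(1-\varepsilon) + O(1/n)}$, and letting $n\to\infty$ and then $\varepsilon\to0$ gives $\liminf r_n/\sqrt n \ge 1$; combined with $r_n\le\sqrt n$ from Theorem~\ref{thm:c_c and r_n}(2), the limit equals $1$.

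The main obstacle — or rather, the only genuine content — is verifying that the Paley-plus-Kronecker construction really does produce a Hadamard matrix of order exactly $2(p+1)$ for every prime $p$, with no residue-class exceptions; this is where one must be careful to split into $p\equiv 1$ and $p\equiv 3 \pmod 4$ and confirm that tensoring a $(p+1)\times(p+1)$ Paley--Hadamard matrix with a $2\times2$ Hadamard matrix lands one in order $2(p+1)$ in the first case, while the Paley construction of the second kind directly gives order $2(p+1)$ in the other. Everything else — monotonicity, Kronecker multiplicativity of Schur norms, and the density-of-primes input — is either already proved in the excerpt or standard. One should also note the mild edge cases (small $n$, or $n$ for which no prime $p\le n/2-1$ exists at all, i.e.\ $n\le 5$); there the claimed inequality is either vacuous or covered by the trivial bound $r_n\ge 1$.
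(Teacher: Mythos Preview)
Your proposal is correct and takes essentially the same approach as the paper: exhibit a Hadamard matrix of order $2(p+1)$ for each odd prime $p$, invoke monotonicity of $\{r_n\}$ (Lemma~\ref{lem:rn_nondec}), and then use the prime number theorem for the limit. The only difference is that the paper cites the Scarpis construction in a single line where you split into Paley~I/II cases; note that in your final paragraph the residue classes are inadvertently swapped --- Paley~I (order $p+1$, to be doubled by tensoring) is the $p\equiv 3\pmod 4$ case, and Paley~II (order $2(p+1)$ directly) is the $p\equiv 1\pmod 4$ case, exactly as you had them the first time around.
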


Before we prove this theorem, it is perhaps worth noting that its hypotheses cannot be satisfied unless $n \geq 6$, since smaller values of $n$ give $n/2 - 1 < 2$ and thus no valid prime number $p$ exists.

\begin{proof}[Proof of Theorem~\ref{thm:fn_lowerbound}.]
    The Scarpis construction of Hadamard matrices \cite{Sca98} tells us that if $p$ is prime then there exists a $2(p+1) \times 2(p+1)$ Hadamard matrix, so $r_{2(p+1)} = \sqrt{2(p+1)}$ when $p$ is prime. Since $p \leq n/2 - 1$ is equivalent to $n \geq 2(p+1)$, monotonicity of $\{r_n\}$ (i.e., Lemma~\ref{lem:rn_nondec}), now tells us that $r_n \geq r_{2(p+1)} = \sqrt{2(p+1)}$, establishing the lower bound claimed by the theorem.

    We now prove the limit equality claimed by the theorem. By the prime number theorem, for every $\varepsilon > 0$ there exists $N \geq 1$ such that, for all $n \geq N$, the largest prime $p$ with $p \leq n/2 - 1$ satisfies $p \geq (1-\varepsilon)(n/2 - 1)$. It follows that
    \[
        r_n \geq \sqrt{2(p+1)} \geq \sqrt{2\big((1-\varepsilon)(n/2 - 1)+1\big)} = \sqrt{(1-\varepsilon)n + 2\varepsilon},
    \]
    so dividing both sides by $\sqrt{n}$ shows that
    \[
        \frac{r_n}{\sqrt{n}} \geq \sqrt{1-\varepsilon + \frac{2\varepsilon}{n}} > \sqrt{1-\varepsilon} \quad \text{whenever} \quad n \geq N.
    \]
    Since $\varepsilon > 0$ was arbitrary, it follows that
    \[
        \lim_{n\rightarrow\infty} \frac{r_n}{\sqrt{n}} = 1,
    \]
    as claimed.
\end{proof}

In the above proof, we used the Scarpis construction of Hadamard matrices that is based on prime numbers. This construction was generalized by Paley \cite{Pal33} to prime powers, and thus the lower bound of Theorem~\ref{thm:fn_lowerbound} could be tightened up to make use of prime powers instead. However, just using primes and the Scarpis construction is enough to show that $\lim_{n\rightarrow\infty}r_n/\sqrt{n} = 1$, which was our goal here.

\section{Computation of Schur Norms}\label{sec:compute_schur}

We now introduce a method of efficiently computing the Schur norm $\|A\|_{\textup{S}}$ of a matrix $A \in \mathbb{M}_n(\mathbb{C})$. In particular, the upcoming Theorem~\ref{thm:schur_norm_sdp} presents a semidefinite program whose optimal value is $\|A\|_{\textup{S}}$. We do not introduce the details of how semidefinite programs are solved numerically, or how semidefinite programming duality works. Rather, we simply note that they can be solved numerically in polynomial time \cite{Lov06}, and we direct the reader to any of a number of introductions to the topic like \cite[Section 1.2.3]{Wat18} and \cite[Section~3.C]{JohALA} for more details.

\begin{theorem}\label{thm:schur_norm_sdp}
    Let $A \in \mathbb{M}_n(\mathbb{C})$. Then $\|A\|_{\textup{S}}$ is the optimal value of each of the following semidefinite programs in the variables $X,Y=Y^*,Z=Z^* \in \mathbb{M}_n(\mathbb{C})$, $\mathbf{v},\mathbf{w} \in \mathbb{R}^n$, and $c \in \mathbb{R}$, which are dual to each other:
\begin{align*}
		\begin{matrix}
		\begin{tabular}{r l c r l}
		\multicolumn{2}{c}{\underline{\textup{Primal problem}}} & \quad \quad & \multicolumn{2}{c}{\underline{\textup{Dual problem}}} \\
		\textup{minimize:} & $c$ & \quad \quad & \textup{maximize:} & $\mathrm{Re}\big(\tr(AX^*)\big)$ \\
		\textup{subject to:} & $\begin{bmatrix}
            Y & A \\ A^* & Z
        \end{bmatrix} \succeq O$ & \quad \quad \quad & \textup{subject to:} & $\begin{bmatrix}
            \mathrm{diag}(\mathbf{v}) & X \\ X^* & \mathrm{diag}(\mathbf{w})
        \end{bmatrix} \succeq O$ \\
		\ & $y_{j,j} = z_{j,j} = c \ \textup{for all} \ 1 \leq j \leq n$ & \quad \quad & \ & $\sum_{j=1}^n v_{j} + \sum_{j=1}^n w_{j} \leq 2$.	
		\end{tabular}
		\end{matrix}
\end{align*}
\end{theorem}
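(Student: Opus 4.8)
The plan is to derive this SDP from the known characterization of the Schur norm via Schur's factorization inequality~\eqref{E5}, strengthened to an equality by the Grothendieck--Haagerup theorem. Recall that $\|A\|_{\textup{S}} = \min\{\|R\|_{\textup{r}}\|C\|_{\textup{c}} : RC = A\}$. By scaling the two factors against each other, we may assume without loss of generality that $\|R\|_{\textup{r}} = \|C\|_{\textup{c}}$, so that $\|A\|_{\textup{S}} = \min\{t : A = RC, \ \|R\|_{\textup{r}} \le \sqrt{t}, \ \|C\|_{\textup{c}} \le \sqrt{t}\}$; equivalently, $\|A\|_{\textup{S}}$ is the smallest $c$ such that $A$ factors as $RC$ with every row of $R$ and every column of $C$ of norm at most $\sqrt{c}$. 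The next step is to encode the existence of such a factorization as a positive semidefiniteness condition. Writing $Y := RR^*$ and $Z := C^*C$, the condition ``$A = RC$ for some $R,C$'' together with the row/column norm bounds is equivalent to the existence of Hermitian $Y,Z$ with $y_{j,j} \le c$, $z_{j,j} \le c$, and
\[
    \begin{bmatrix} Y & A \\ A^* & Z \end{bmatrix} \succeq O,
\]
since a block matrix of this form is PSD if and only if $A = Y^{1/2} K Z^{1/2}$ for some contraction $K$ (a standard Schur-complement / factorization fact), which is exactly the statement that $A = RC$ with $RR^* \preceq Y$ and $C^*C \preceq Z$. One checks that at optimality we may take $y_{j,j} = z_{j,j} = c$ (slack in a diagonal entry only helps), which gives the primal program verbatim.

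From there, the dual program is obtained by writing down the SDP dual in the standard way: introduce a Hermitian block multiplier $\begin{bmatrix} \mathrm{diag}(\mathbf v) & X \\ X^* & \mathrm{diag}(\mathbf w)\end{bmatrix} \succeq O$ for the PSD constraint and scalar multipliers for the $2n$ equality constraints $y_{j,j} = c$, $z_{j,j} = c$. Forming the Lagrangian and collecting terms: the coefficient of $c$ forces $\sum_j v_j + \sum_j w_j = 2$ (which relaxes to $\le 2$ once one checks the objective is monotone), the off-diagonal blocks pair $X$ against $A$ to produce the objective $\mathrm{Re}(\tr(AX^*))$, and the remaining terms vanish. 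I would then verify weak duality directly (a one-line trace computation against the two PSD blocks) and argue strong duality / attainment via Slater's condition — the primal is strictly feasible by taking $Y = Z = cI$ with $c$ large and the off-diagonal block $A$, making the big block strictly PSD, and it is bounded below by~$0$, so there is no duality gap and both optima are attained.

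The main obstacle, and the step deserving the most care, is the first reduction: justifying cleanly that the Grothendieck--Haagerup factorization equality can be put in the symmetric ``$\|R\|_{\textup{r}} = \|C\|_{\textup{c}} = \sqrt{c}$'' form and, more importantly, that the block-PSD condition with prescribed diagonal bounds is \emph{exactly} equivalent to the existence of such a factorization — i.e. that passing to the Gram matrices $Y = RR^*$, $Z = C^*C$ loses nothing. The forward direction is immediate; the reverse direction requires extracting, from a PSD block matrix with controlled diagonal, a genuine factorization $A = RC$ with the row-norm of $R$ and column-norm of $C$ controlled by those diagonal entries, which is where one invokes the Schur-complement characterization of PSD block matrices together with the fact that $\begin{bmatrix} Y & A \\ A^* & Z\end{bmatrix}\succeq O$ implies $A = Y^{1/2}KZ^{1/2}$ with $\|K\|\le 1$. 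Everything after that — the dual derivation, weak duality, and the Slater argument — is routine SDP bookkeeping.
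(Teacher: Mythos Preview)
Your proposal is correct, and the latter half (dual derivation, Slater's condition with $Y=Z=cI$) coincides with the paper's argument. The difference is in how you obtain the primal characterization. The paper simply cites a result of Paulsen, Power, and Smith stating that $\|A\|_{\textup{S}}\le 1$ if and only if there exist Hermitian $Y,Z$ with unit diagonal making $\begin{bmatrix}Y&A\\A^*&Z\end{bmatrix}$ positive semidefinite, and then scales by homogeneity. You instead derive this characterization from scratch: starting from the Grothendieck--Haagerup equality $\|A\|_{\textup{S}}=\min\{\|R\|_{\textup{r}}\|C\|_{\textup{c}}:RC=A\}$ (already discussed in Section~\ref{sec:schur_norms}), you pass to Gram matrices $Y=RR^*$, $Z=C^*C$ for the forward direction and use the Schur-complement factorization $A=Y^{1/2}KZ^{1/2}$ with $\|K\|\le 1$ for the reverse. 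Your route is more self-contained relative to the paper's earlier material, at the cost of re-proving what is essentially the Paulsen--Power--Smith theorem; the paper's route is shorter but imports that result as a black box. One small point worth making explicit in your write-up: in the reverse direction you should name the actual factors, e.g.\ $R=Y^{1/2}$ and $C=KZ^{1/2}$, and note that $C^*C=Z^{1/2}K^*KZ^{1/2}\preceq Z$ so that the column norms of $C$ are indeed controlled by the diagonal of $Z$.
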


\begin{proof}
    It was shown in \cite{PPS89} that $\|A\|_{\textup{S}} \leq 1$ if and only if there exist Hermitian $Y,Z \in \mathbb{M}_n(\mathbb{C})$ with $y_{j,j} = z_{j,j} = 1$ for all $1 \leq j \leq n$ such that
    \[
        \begin{bmatrix}
                Y & A \\ A^* & Z
        \end{bmatrix} \succeq O.
    \]
    It follows immediately from positive homogeneity of $\|A\|_{\textup{S}}$ that $\|A\|_{\textup{S}} \leq c$ if and only if there exist Hermitian $Y,Z \in \mathbb{M}_n(\mathbb{C})$ satisfying the same positive semidefinite condition, but with $y_{j,j} = z_{j,j} = c$ for all $1 \leq j \leq n$. Minimizing over all such $c$, $Y$, and $Z$ is exactly what the primal problem described by the theorem does, and its optimal value is thus $\|A\|_{\textup{S}}$.
    
    The fact that the dual semidefinite program has the indicated form follows from a routine calculation. All that remains to show is that the primal and dual problems have the same optimal value, and the optimal value in the dual problem is actually attained (i.e., the maximum really is a maximum instead of a supremum). To this end, we simply note that the primal problem is strictly feasible, since we can choose $Y = Z = cI$ for some sufficiently large value of $c$ to make
    \[
        \begin{bmatrix}
            Y & A \\ A^* & Z
        \end{bmatrix} = \begin{bmatrix}
            cI & A \\ A^* & cI
        \end{bmatrix} \succ O.
    \]
    It then follows from Slater's conditions for strong duality that the optimal value of the dual problem is attained, and it equals the optimal value of the primal problem (i.e., $\|A\|_{\textup{S}}$).
\end{proof}

An alternative proof of the above theorem was given by the second author in \cite{MathO14}. That proof used the fact that the Schur norm is a special case of something called a \emph{completely bounded} norm, and all completely bounded norms can be computed via semidefinite programming \cite{Wat09,Wat13}. Code that implements these semidefinite programs for computing Schur norms, via the CVX package for MATLAB \cite{cvx}, or the Convex.jl package for Julia \cite{convexjl}, is available for download from \cite{SuppCode}.

\subsection{Computation of $r_n$}\label{sec:compute_rn}

In Section~\ref{sec:spectra} we computed $r_n$ for a few small values of $n$: $r_2 = \sqrt{2}$, $r_3 = 5/3$, and $r_4 = 2$, and we saw numerical results that suggested that $r_5 = 11/5$ and $r_6 \approx 2.3874$. We now use Theorem~\ref{thm:schur_norm_sdp} to make these numerical results rigorous, and extend them slightly.

\begin{theorem}\label{thm:r5}
    $\displaystyle r_5 = \frac{11}{5}$, $\displaystyle r_6 = \frac{1}{3}(4 + \sqrt{10})$, and $\displaystyle r_7 = \frac{1}{7}(1 + 12\sqrt{2})$.
\end{theorem}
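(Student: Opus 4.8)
The plan is to combine the structural reduction from Equation~\eqref{eq:rn_reduced} with the enumeration of equivalence classes in $\Q_n(\R)$ for $n \leq 7$ (carried out in Section~\ref{sec:equiv_mat}) and the semidefinite program of Theorem~\ref{thm:schur_norm_sdp}. Since $r_n = \max\{\|M\|_{\textup{S}} : M \in \Q_n(\R)\}$ and the Schur norm is invariant under the equivalence operations, it suffices to run the SDP on one representative of each of the $30$, $242$, and $4386$ equivalence classes for $n = 5, 6, 7$ respectively, identify which class attains the maximum, and then verify the exact value analytically. So the work splits into two parts: (i) producing a matrix $M$ attaining the claimed value (the lower bound $r_n \geq$ claimed value), and (ii) showing no matrix in $\Q_n(\R)$ exceeds it (the upper bound).

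For part~(i), I would exhibit for each $n$ an explicit $M \in \Q_n(\R)$ together with a witness contraction $B$ (orthogonal, by Theorem~\ref{thm:witness forms}(2)) such that $\|M \circ B\| = \|(M\circ B)\u\|$ equals the claimed value for an explicit unit vector $\u$; this reduces to an eigenvalue computation for $(M\circ B)^{\t}(M\circ B)$, which one can do in closed form for these small cases. Alternatively, and perhaps more cleanly, I would use the factorization bound~\eqref{E5}: exhibit $M = RC$ with $\|R\|_{\textup{r}}\|C\|_{\textup{c}}$ equal to the claimed value, which gives the matching \emph{upper} bound on that particular $M$'s Schur norm, and pair it with a witness to show the bound is tight. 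The numbers $11/5$, $\tfrac{1}{3}(4+\sqrt{10})$, and $\tfrac{1}{7}(1+12\sqrt{2})$ are algebraic of low degree, consistent with their arising as square roots of eigenvalues of small rational matrices, so closed forms should be extractable from the numerical optimizer output.

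For part~(ii), the rigorous upper bound is where the semidefinite program does the real work: for each of the (finitely many) equivalence-class representatives $M$, the \emph{dual} feasibility of the SDP in Theorem~\ref{thm:schur_norm_sdp} — i.e., exhibiting Hermitian $Y, Z$ with $y_{j,j} = z_{j,j} = c$ and $\left[\begin{smallmatrix} Y & M \\ M^{*} & Z\end{smallmatrix}\right] \succeq O$ — certifies $\|M\|_{\textup{S}} \leq c$. So the proof amounts to: for the maximizing class, give an exact rational (or low-degree algebraic) certificate $Y, Z$ with $c$ equal to the claimed value; for every other class, the numerics already show $\|M\|_{\textup{S}}$ is strictly below, and since these are only finitely many matrices with algebraic Schur norms, one argues (or simply checks with a tolerance argument) that none can tie or exceed the maximum. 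The $n=5,6,7$ values above are exactly $\max\sigma_5$, $\max\sigma_6$, and $\max\sigma_7$ as listed in Section~\ref{sec:small_schur_spectra}.

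The main obstacle is part~(ii) for $n = 7$: one must certify the bound for the winning class among $4386$ candidates, and extracting an \emph{exact} positive-semidefinite certificate $(Y,Z,c)$ from a floating-point SDP solution requires either rounding to rationals and re-verifying semidefiniteness by hand (feasible since $c = \tfrac{1}{7}(1+12\sqrt{2})$ lives in $\Q(\sqrt 2)$, so one looks for certificates over that field) or an exact symbolic solve of the KKT system for that one matrix. The enumeration itself (Section~\ref{sec:equiv_mat}) and the bulk numerical sweep are routine given the provided code at~\cite{SuppCode}; the delicate step is promoting the three numerical maxima to proven closed forms with matching primal witnesses and dual certificates.
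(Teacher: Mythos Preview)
Your overall strategy is correct and matches the paper's: reduce to $\Q_n(\R)$ via Equation~\eqref{eq:rn_reduced}, enumerate equivalence classes, sweep with the SDP of Theorem~\ref{thm:schur_norm_sdp} to isolate the single maximizing class, and then certify that class's Schur norm exactly. The paper also relies on the numerical sweep (accurate to many decimal places) to dispose of all non-maximizing classes by a strict-inequality tolerance argument, just as you propose.

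Where your plan diverges is in how the exact value for the winning class is pinned down, and this causes you to misidentify which case is hard. For $n=5$ and $n=7$ the maximizing matrix turns out to be \emph{circulant}, so the paper simply invokes the closed-form formula~\eqref{E6} and reads off $11/5$ and $(1+12\sqrt{2})/7$ directly---no SDP certificate extraction, no factorization via~\eqref{E5}, no witness construction needed. The genuinely delicate case is $n=6$: the maximizer there is \emph{not} circulant, and the paper does exactly what you describe in general---it writes down explicit primal data $(Y,Z,c)$ and dual data $(X,\mathbf{v},\mathbf{w})$ over $\Q(\sqrt{10})$ with matching objective $(4+\sqrt{10})/3$, which one verifies by hand. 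So your ``main obstacle'' paragraph should be redirected from $n=7$ to $n=6$; once you notice the circulant structure for $n\in\{5,7\}$, those cases become the easy ones.
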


\begin{proof}
    As mentioned in Section~\ref{sec:spectra}, there are only $30$ equivalence classes in $\mathbb{Q}_5(\mathbb{R})$ (i.e., every member of $\mathbb{Q}_5(\mathbb{R})$ can be transformed into one of $30$ fixed matrices via the operations of transposition, multiplication by diagonal unitary matrices, and multiplication by permutation matrices). Furthermore, our code \cite{SuppCode} that implements the semidefinite program described by Theorem~\ref{thm:schur_norm_sdp} shows that $29$ of those $30$ matrices have Schur norm no larger than $2.19$. While this computation was performed numerically, it is accurate to at least $8$ decimal places, so these $29$ matrices are certain to have Schur norm strictly less than $11/5 = 2.2$.
    
    The one matrix (up to the aforementioned equivalences) $A \in \mathbb{Q}_5(\mathbb{R})$ with Schur norm larger than $2.19$ is
    \begin{align*}
        A & = \begin{bmatrix}
            1 & -1 & -1 & -1 & -1 \\
            -1 & 1 & -1 & -1 & -1 \\
            -1 & -1 & 1 & -1 & -1 \\
            -1 & -1 & -1 & 1 & -1 \\
            -1 & -1 & -1 & -1 & 1
        \end{bmatrix}.
    \end{align*}
     Since this matrix $A$ is circulant, $\|A\|_{\textup{S}}$ can be verified to equal exactly $11/5$ via the formula~\eqref{E6}.
     
    A similar computation shows that, of the $4386$ equivalence classes in $\mathbb{Q}_7(\mathbb{R})$, there is only one in which the matrices have Schur norm larger than $2.56 < (1 + 12\sqrt{2})/7 \approx 2.5672$. One matrix in that equivalence class is
    \begin{align}\label{eq:r7_circ}
        B & = \begin{bmatrix}
            1 & 1 & -1 & 1 & -1 & -1 & -1 \\
            -1 & 1 & 1 & -1 & 1 & -1 & -1 \\
            -1 & -1 & 1 & 1 & -1 & 1 & -1 \\
            -1 & -1 & -1 & 1 & 1 & -1 & 1 \\
            1 & -1 & -1 & -1 & 1 & 1 & -1 \\
            -1 & 1 & -1 & -1 & -1 & 1 & 1 \\
            1 & -1 & 1 & -1 & -1 & -1 & 1
        \end{bmatrix}.
    \end{align}
    Since this matrix $B$ is circulant, $\|B\|_{\textup{S}}$ can be verified to equal exactly $(1 + 12\sqrt{2})/7$, again via the formula~\eqref{E6}.
     
    Finally, to demonstrate the value of $r_6$, we showed that matrices in $241$ of the $242$ equivalence classes of $\mathbb{Q}_6(\mathbb{R})$ have Schur norm no larger than $2.38 < \frac{1}{7}(1 + 12\sqrt{2}) \approx 2.3874$. The one matrix (up to the aforementioned equivalences) $C \in \mathbb{Q}_6(\mathbb{R})$ with Schur norm larger than $2.38$ is
    \begin{align*}
        C & = \begin{bmatrix}
            1 &  1 &  1 &  1 &  1 &  1 \\
            1 &  1 &  1 & -1 & -1 & -1 \\
            1 &  1 & -1 &  1 & -1 & -1 \\
            1 & -1 &  1 &  1 & -1 & -1 \\
            1 & -1 & -1 & -1 &  1 & -1 \\
            1 & -1 & -1 & -1 & -1 &  1
        \end{bmatrix}.
    \end{align*}
    
    To see that this matrix $C$ has $\|C\|_{\textup{S}} = (4 + \sqrt{10})/3$, and thus complete the proof, we simply find feasible points of the dual pair of semidefinite programs from Theorem~\ref{thm:schur_norm_sdp} that produce this value in the objective function. It is straightforward to check that the following values of $c$, $Y$, and $Z$ work in the primal problem:
    \[
        c = \frac{1}{3}(4 + \sqrt{10}), \quad Y = Z = \begin{bmatrix}
            c & 0 & 0 & 0 & 2-c & 2-c \\
            0 & c & c-2 & c-2 & 0 & 0 \\
            0 & c-2 & c & c-2 & 0 & 0 \\
            0 & c-2 & c-2 & c & 0 & 0 \\
            2-c & 0 & 0 & 0 & c & c-2 \\
            2-c & 0 & 0 & 0 & c-2 & c
        \end{bmatrix},
    \]
    and the following values of $X$, $\mathbf{v}$, and $\mathbf{w}$ work in the dual problem:
    \[
        X = \frac{1}{18}\begin{bmatrix}
            2 & 0 & 0 & 0 & 1 & 1 \\
            0 & 1 & 1 & -2 & 0 & 0 \\
            0 & 1 & -2 & 1 & 0 & 0 \\
            0 & -2 & 1 & 1 & 0 & 0 \\
            1 & 0 & 0 & 0 & 2 & -1 \\
            1 & 0 & 0 & 0 & -1 & 2
        \end{bmatrix} + \frac{\sqrt{10}}{180}\begin{bmatrix}
            -1 & 3 & 3 & 3 & 1 & 1 \\
            3 & 1 & 1 & 1 & -3 & -3 \\
            3 & 1 & 1 & 1 & -3 & -3 \\
            3 & 1 & 1 & 1 & -3 & -3 \\
            1 & -3 & -3 & -3 & -1 & -1 \\
            1 & -3 & -3 & -3 & -1 & -1
        \end{bmatrix},
    \]
    and $\mathbf{v} = \mathbf{w} = (1,1,1,1,1,1)/6$.
\end{proof}

Computation of $r_n$ in general seems to be a hard problem (which perhaps is not surprising, since a quick method of computing $r_n$ would let us quickly determine whether or not there is an $n \times n$ Hadamard matrix). The method that we used requires us to compute the Schur norm of every matrix in $\mathbb{Q}_n(\mathbb{R})$. While each Schur norm can be computed in polynomial time, we need to compute the Schur norm of exponentially many matrices in $\mathbb{Q}_n(\mathbb{R})$ to determine which one has the largest Schur norm.

In particular, there are $2^{n^2}$ matrices in $\mathbb{Q}_n(\mathbb{R})$. While we do not have to compute the Schur norm of all of them (since the Schur norm does not change upon taking the transpose, permuting rows or columns, or multiplying a row or column by $-1$), there are still at least
\[
    \frac{2^{n^2}}{2 \cdot (n!)^2 \cdot 2^{2n-1}} = \frac{2^{n(n - 2)}}{(n!)^2}
\]
equivalence classes inside $\mathbb{Q}_n(\mathbb{R})$ to check (so, for example, by $n = 15$ there are at least $2.93 \times 10^{34}$ equivalence classes, and we need to compute Schur norms of at least one matrix in each of them).

However, there may be a much more clever way of computing $r_n$, and we do not expect a precise statement concerning the theoretical difficulty of computing it (e.g., we do not expect a proof that computation of $r_n$ is NP-hard). After all, if the Hadamard conjecture is true then it implies $r_{4n} = 2\sqrt{n}$ for all $n$, making $r_{4n}$ trivial to compute.

\subsection{Schur Norms of Circulant Matrices}\label{sec:circulant}

It has been conjectured that there are no circulant Hadamard matrices except in the $n = 1$ and $n = 4$ cases \cite[page~134]{Rys63}, so it seems natural to explore the variant of $r_n$ where, instead of maximizing $\|A\|_{\textup{S}}$ over all members of $\mathbb{Q}_n(\mathbb{R})$, we maximize over all \emph{circulant} members of $\mathbb{Q}_n(\mathbb{R})$. We call the resulting quantity $rC_n$, and our earlier results show that $r_n = rC_n$ when $n \in \{1,3,4,5,7\}$, but $r_n > rC_n$ when $n \in \{2,6,8\}$.

By cataloguing the largest Schur norms of matrices in $\mathbb{Q}_n(\mathbb{R})$ that we have been able to find, we have also shown that $r_n > rC_n$ when $9 \leq n \leq 12$ and when $14 \leq n \leq 24$. The results of these computations, which include the values of $rC_n$ and the best bounds that we have on $r_n$ for $1 \leq n \leq 24$, are summarized in Table~\ref{tab:small_dim_numerics}. The matrices with Schur norms equal to the values are available for download from \cite{SuppCode}.

\begin{table}[!htb]
\begin{center}
    \begin{tabular}{ c | r r r }
    \toprule
    $n$ & \multicolumn{1}{c}{$rC_n$} & \multicolumn{1}{c}{$r_n$ lower bound} & \multicolumn{1}{c}{$r_n$ upper bound} \\ \midrule
    $1$ & \cellcolor{lightgray!30!white} $1 = 1.0000$ & \cellcolor{lightgray!30!white} $1 = 1.0000$ & \cellcolor{lightgray!30!white} $1 = 1.0000$ \\
    $2$ & $1 = 1.0000$ & \cellcolor{lightgray!30!white} $\sqrt{2} \approx 1.4142$ & \cellcolor{lightgray!30!white} $\sqrt{2} \approx 1.4142$ \\
    $3$ & \cellcolor{lightgray!30!white} $5/3 \approx 1.6667$ & \cellcolor{lightgray!30!white} $5/3 \approx 1.6667$ & \cellcolor{lightgray!30!white} $5/3 \approx 1.6667$ \\
    $4$ & \cellcolor{lightgray!30!white} $2 = 2.0000$ & \cellcolor{lightgray!30!white} $2 = 2.0000$ & \cellcolor{lightgray!30!white} $2 = 2.0000$ \\ \midrule
    $5$ & \cellcolor{lightgray!30!white} $11/5 = 2.2000$ & \cellcolor{lightgray!30!white} $11/5 = 2.2000$ & \cellcolor{lightgray!30!white} $11/5 = 2.2000$ \\
    $6$ & $7/3 \approx 2.3333$ & \cellcolor{lightgray!30!white} $(4+\sqrt{10})/3 \approx 2.3874$ & \cellcolor{lightgray!30!white} $(4+\sqrt{10})/3 \approx 2.3874$ \\
    $7$ & \cellcolor{lightgray!30!white} $(1 + 12\sqrt{2})/7 \approx 2.5672$ & \cellcolor{lightgray!30!white} $(1 + 12\sqrt{2})/7 \approx 2.5672$ & \cellcolor{lightgray!30!white} $(1 + 12\sqrt{2})/7 \approx 2.5672$ \\
    $8$ & $1 + \sqrt{3} \approx 2.7321$ & \cellcolor{lightgray!30!white} $2\sqrt{2} \approx 2.8284$ & \cellcolor{lightgray!30!white} $2\sqrt{2} \approx 2.8284$ \\ \midrule
    $9$ & $\approx 2.8539$ & $\approx 2.9477$ & $3 = 3.0000$ \\
    $10$ & $\approx 2.9714$ & $11\sqrt{2}/5 \approx 3.1113$ & $\sqrt{10} \approx 3.1623$ \\
    $11$ & $(1 + 20\sqrt{3})/11 \approx 3.2401$ & $\approx 3.2454$ & $\sqrt{11} \approx 3.3166$ \\
    $12$ & $2+\sqrt{2} \approx 3.4142$ & \cellcolor{lightgray!30!white} $2\sqrt{3} \approx 3.4641$ & \cellcolor{lightgray!30!white} $2\sqrt{3} \approx 3.4641$ \\ \midrule
    $13$ & \cellcolor{lightgray!30!white} $(5+24\sqrt{3})/13 \approx 3.5822$ & \cellcolor{lightgray!30!white} $(5+24\sqrt{3})/13 \approx 3.5822$ & $\sqrt{13} \approx 3.6056$ \\
    $14$ & $(17 + 6\sqrt{2})/7 \approx 3.6408$ & $\approx 3.6977$ & $\sqrt{14} \approx 3.7417$ \\
    $15$ & $\approx 3.8068$ & $\approx 3.8102$ & $\sqrt{15} \approx 3.8730$ \\
    $16$ & $\approx 3.8882$ & \cellcolor{lightgray!30!white} $4 = 4.0000$ & \cellcolor{lightgray!30!white} $4 = 4.0000$ \\ \midrule
    $17$ & $\approx 4.0205$ & $\approx 4.0848$ & $\sqrt{17} \approx 4.1231$ \\
    $18$ & $\approx 4.1265$ & $(32 + \sqrt{34})/9 \approx 4.2034$ & $3\sqrt{2} \approx 4.2426$ \\
    $19$ & $\approx 4.3050$ & $\approx 4.3071$ & $\sqrt{19} \approx 4.3589$ \\
    $20$ & $22/5 = 4.4000$ & \cellcolor{lightgray!30!white} $2\sqrt{5} \approx 4.4721$ & \cellcolor{lightgray!30!white} $2\sqrt{5} \approx 4.4721$ \\ \midrule
    $21$ & $(21 + 4\sqrt{7})/7 \approx 4.5119$ & $\approx 4.5535$ & $\sqrt{21} \approx 4.5826$ \\
    $22$ & $\approx 4.5892$ & $\approx 4.6506$ & $\sqrt{22} \approx 4.6904$ \\
    $23$ & $(1+44\sqrt{6})/23 \approx 4.7295$ & $\approx 4.7426$ & $\sqrt{23} \approx 4.7958$ \\
    $24$ & $\approx 4.8640$ & \cellcolor{lightgray!30!white} $2\sqrt{6} \approx 4.8990$ & \cellcolor{lightgray!30!white} $2\sqrt{6} \approx 4.8990$ \\\bottomrule
    \end{tabular}
    \caption{A summary of the values of $rC_n$, as well as the best bounds on $r_n$ that we have, for $1 \leq n \leq 24$. All numerical values given in the table have been rounded to, and are accurate to, four decimal places. The left column is a lower bound on the middle column, which is a lower bound on the right column. Rows in which two or more columns have the same value are highlighted in gray.}\label{tab:small_dim_numerics}
\end{center}
\end{table}

We note that the values of $rC_n$ given in Table~\ref{tab:small_dim_numerics} have all be rounded to $4$ decimal places, but exact forms for all of them can be found via Equation~\eqref{E6}. We do not provide most of these exact forms, since they are often quite long and ugly. For example, the exact value of $rC_{10}$ is
\[
    rC_{10} = \frac{1}{5}\left(3 + 2\sqrt{5} + 2\sqrt{7 + 2\sqrt{11}}\right) \approx 2.9714,
\]
which is attained at the circulant matrix with top row equal to $(1,1,-1,1,-1,-1,-1,-1,-1,-1)$.

\section{Better Almost Hadamard Matrices}\label{sec:almost_hadamard}

In an approach to the Hadamard conjecture that is somewhat dual to ours, it was shown in \cite{BCS10} that the entrywise $1$-norm (i.e., the sum of absolute values of entries) of an $n \times n$ real orthogonal matrix is bounded above by $n\sqrt{n}$, and equality holds exactly for multiples of Hadamard matrices. Based on this idea, in \cite{BNZ12} the authors asked what the largest entrywise $1$-norm of a real orthogonal matrix is, as a function of its size $n$. They furthermore called an orthogonal matrix that locally maximizes this entrywise $1$-norm an \emph{almost Hadamard matrix}.

The authors found that for $n = 2$, $3$, and $4$, the optimal $1$-norms are $2\sqrt{2}$, $5$, and $8$, respectively, and they made some conjectures based on numerics for $5 \leq n \leq 13$ (see \cite[Table~1]{BNZ12}). In this section, we extend their results by proving the optimal $1$-norms when $5 \leq n \leq 8$, which agree with their conjectures when $n \in \{5,7,8\}$, but disprove their conjecture when $n = 6$. We also disprove their conjectured values when $n \in \{9,11\}$, and we provide numerical results up to $n = 24$.

To give a bit of an idea for how our results in this area work, consider the $6 \times 6$ matrix $X$ from the proof of Theorem~\ref{thm:r5}. It was not important for us back then, but $6X$ is actually an orthogonal matrix. Straightforward computation shows that its entrywise $1$-norm is $8 + 2\sqrt{10} \approx 14.3246$, beating the conjectured largest $1$-norm of $10\sqrt{2} \approx 14.1421$.

The above observation might lead us to believe that we can use the ``dual'' semidefinite program from Theorem~\ref{thm:schur_norm_sdp} to construct almost Hadamard matrices in general. This is almost correct, but the semidefinite program actually needs some slight tweaks:

\begin{theorem}\label{thm:almost_hadamard_sdp}
    Let $A \in \mathbb{Q}_n(\mathbb{R})$, and let $\nu_A$ be the optimal value of the following semidefinite program:
    \begin{align*}
    		\begin{matrix}
    		\begin{tabular}{r l}
    		\textup{maximize:} & $\tr(AX)$ \\
    		\textup{subject to:} & $\begin{bmatrix}
                I & X \\ X^T & I
            \end{bmatrix} \succeq O$.	
    		\end{tabular}
    		\end{matrix}
    \end{align*}
    Then the largest entrywise $1$-norm of an $n \times n$ real orthogonal matrix is equal to
    \begin{align}\label{eq:max_ent_1norm}
        \max\{\nu_{A} : A \in \mathbb{Q}_n(\mathbb{R})\}.
    \end{align}
\end{theorem}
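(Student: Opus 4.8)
The plan is to strip the semidefinite program down to a transparent optimization. First I would note that the positive semidefinite constraint is a disguised operator-norm bound: since the top-left block is the invertible identity, the Schur complement criterion gives $\begin{bmatrix} I & X \\ X^T & I\end{bmatrix} \succeq O$ if and only if $I - X^T X \succeq O$, i.e.\ $\|X\| \le 1$. Hence $\nu_A = \max\{\tr(AX) : X \in \mathbb{M}_n(\mathbb{R}),\ \|X\| \le 1\}$, and the quantity~\eqref{eq:max_ent_1norm} equals the double maximum $\max_{A \in \mathbb{Q}_n(\mathbb{R})}\,\max_{\|X\| \le 1}\tr(AX)$, which may be evaluated in either order.

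Next I would carry out the maximization over $A$ entrywise. Writing $\tr(AX) = \sum_{i,j} a_{i,j}\,x_{j,i}$, this is linear in $A$, and each entry $a_{i,j}$ ranges independently over $\{-1,1\}$; therefore $\max_{A \in \mathbb{Q}_n(\mathbb{R})}\tr(AX) = \sum_{i,j}|x_{i,j}|$, the entrywise $1$-norm of $X$. Thus~\eqref{eq:max_ent_1norm} equals $\max\{\sum_{i,j}|x_{i,j}| : \|X\| \le 1\}$, i.e.\ the largest entrywise $1$-norm among all real contractions.

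To finish I would invoke convexity: the entrywise $1$-norm is convex, and the set of contractions in $\mathbb{M}_n(\mathbb{R})$ is compact and convex, so this maximum is attained at an extreme point of the contraction ball, namely at a real orthogonal matrix --- the very fact about extreme points already used in the proof of Theorem~\ref{thm:witness forms}(2). Hence the maximum over all contractions coincides with the maximum over orthogonal matrices, which is exactly the largest entrywise $1$-norm of an $n \times n$ real orthogonal matrix. There is no deep obstacle: the argument is essentially bookkeeping, and the only non-formal ingredients are the extreme-point characterization of the operator-norm unit ball and --- if one additionally wants the SDP optimum to be genuinely attained rather than a supremum --- compactness of its feasible region, which is automatic since $\|X\| \le 1$ bounds every entry of $X$.
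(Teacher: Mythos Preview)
Your proof is correct and follows essentially the same approach as the paper: identify the PSD constraint as the operator-norm bound $\|X\|\le 1$, then use convexity to reduce to orthogonal $X$ and the sign-matching argument to reduce the maximization over $A$ to the entrywise $1$-norm. The only cosmetic difference is the order in which you perform the two maximizations (you take $A$ first, the paper takes $X$ first), but the content is identical.
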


\begin{proof}
    The constraint in the semidefinite program simply forces $\|X\| \leq 1$, so by convexity the maximum is attained when $X$ is an extreme point of the unit ball in the operator norm (i.e., when $X$ is an orthogonal matrix). It follows that the quantity~\eqref{eq:max_ent_1norm} is equal to
    \[
        \max\{ \tr(AX) : A \in \mathbb{Q}_n(\mathbb{R}), X \ \text{is orthogonal}\}.
    \]
    Since $\tr(AX) = \sum_{i,j=1}^n x_{i,j}a_{j,i}$ and $A \in \mathbb{Q}_n(\mathbb{R})$ so $|a_{j,i}| = 1$ for all $i,j$, it is clear that $\tr(AX) \leq \sum_{i,j=1}^n |x_{i,j}|$. Conversely, there exists some $A \in \mathbb{Q}_n(\mathbb{R})$ (matching the sign pattern of $X$) so that $\tr(AX) = \sum_{i,j=1}^n |x_{i,j}|$, which completes the proof.
\end{proof}

An immediately corollary of the above theorem is that the largest entrywise $1$-norm of an $n \times n$ orthogonal matrix is bounded above by $nr_n$ (after all, the SDP described by it is more restrictive than $n$ times the ``dual'' SDP described by Theorem~\ref{thm:schur_norm_sdp}).

The importance of Theorem~\ref{thm:almost_hadamard_sdp} is that it reduces the problem of finding an orthogonal matrix with maximum entrywise $1$-norm to a finite computation: each value of $\nu_A$ can be approximated to any desired accuracy in polynomial time, and there are only finitely many values of $\nu_A$ that need to be computed. Furthermore, since all of the concepts we are discussing (e.g., Schur norms, the entrywise $1$-norm, and the set of real orthogonal matrices) are unchanged under the standard equivalence operations (i.e., permuting rows and/or columns, multiplying rows and/or columns by $-1$, and transposition), it suffices to compute $\nu_A$ for just a single representative of each equivalence class, rather than for all $A \in \mathbb{Q}_n(\mathbb{R})$.

We have performed this computation for $n \leq 7$, so we now know the exact largest entrywise $1$-norm of a real orthogonal matrix in these cases. We have also use this method to search numerically for orthogonal matrices with large entrywise $1$-norm for $n \leq 24$. Our results are summarized in Table~\ref{tab:almost_hadamard}, and the orthogonal matrices that attain these lower bounds can be downloaded from \cite{SuppCode}.

\begin{table}[!htb]
\begin{center}
    \begin{tabular}{ c | r r c }
    \toprule
    $n$ & \multicolumn{1}{c}{entrywise $1$-norm lower bound} & \multicolumn{1}{c}{upper bound} & \multicolumn{1}{c}{notes} \\ \midrule
    \rowcolor{lightgray!30!white} $1$ & $1 = 1.0000$ & $1 = 1.0000$ & Hadamard \\
    \rowcolor{lightgray!30!white} $2$ & $2\sqrt{2} \approx 2.8284$ & $2\sqrt{2} \approx 2.8284$ & Hadamard \\
    \rowcolor{lightgray!30!white} $3$ & $5 = 5.0000$ & $5 = 5.0000$ & \\
    \rowcolor{lightgray!30!white} $4$ & $8 = 8.0000$ & $8 = 8.0000$ & Hadamard \\\midrule
    \rowcolor{lightgray!30!white} $5$ & $11 = 11.0000$ & $11 = 11.0000$ & \\
    \rowcolor{lightgray!30!white} $6$ & $8 + 2\sqrt{10} \approx 14.3246$ & $8 + 2\sqrt{10} \approx 14.3246$ & better than previously known\\
    \rowcolor{lightgray!30!white} $7$ & $1 + 12\sqrt{2} \approx 17.9706$ & $1 + 12\sqrt{2} \approx 17.9706$ & \\
    \rowcolor{lightgray!30!white} $8$ & $16\sqrt{2} \approx 22.6274$ & $16\sqrt{2} \approx 22.6274$ & Hadamard \\\midrule
    $9$ & $\approx 26.5204$ & $27 = 27.0000$ & better than previously known\\
    $10$ & $22\sqrt{2} \approx 31.1127$ & $10\sqrt{10} \approx 31.6228$ & \\
    $11$ & $\approx 35.6991$ & $11\sqrt{11} \approx 36.4828$ & better than previously known\\
    \rowcolor{lightgray!30!white} $12$ & $24\sqrt{3} \approx 41.5692$ & $24\sqrt{3} \approx 41.5692$ & Hadamard\\\midrule
    $13$ & $5 + 24\sqrt{3} \approx 46.5692$ & $13\sqrt{13} \approx 46.8722$ & \\
    $14$ & $\approx 51.7673$ & $14\sqrt{14} \approx 52.3832$ & \\
    $15$ & $\approx 57.1522$ & $15\sqrt{15} \approx 58.0948$ & \\
    \rowcolor{lightgray!30!white} $16$ & $64 = 64.0000$ & $64 = 64.0000$ & Hadamard\\\midrule
    $17$ & $60 + \sqrt{89} \approx 69.4340$ & $17\sqrt{17} \approx 70.0928$ & \\
    $18$ & $64 + 2\sqrt{34} \approx 75.6619$ & $54\sqrt{2} \approx 76.3675$ & \\
    $19$ & $\approx 81.8353$ & $19\sqrt{19} \approx 82.8191$ & \\
    \rowcolor{lightgray!30!white} $20$ & $4\sqrt{5} \approx 89.4427$ & $4\sqrt{5} \approx 89.4427$ & Hadamard\\\midrule
    $21$ & $\approx 95.6206$ & $21\sqrt{21} \approx 96.2341$ & \\
    $22$ & $\approx 102.3127$ & $22\sqrt{22} \approx 103.1891$ & \\
    $23$ & $\approx 109.0784$ & $23\sqrt{23} \approx 110.3041$ & \\
    \rowcolor{lightgray!30!white} $24$ & $48\sqrt{6} \approx 117.5755$ & $48\sqrt{6} \approx 117.5755$ & Hadamard\\\bottomrule
    \end{tabular}
    \caption{A summary of the best lower and upper bounds on the maximum entrywise $1$-norm of an $n \times n$ real orthogonal matrix that we have been able to compute, for $1 \leq n \leq 24$. All numerical values have been rounded to, and are accurate to, four decimal places. Rows highlighted in gray indicate that the lower and upper bounds are equal, so the exact maximum value is known.}\label{tab:almost_hadamard}
\end{center}
\end{table}

\section{Conclusions and Open Questions}\label{sec:conclusions}

In this work, we initiated the study of Schur norms of matrices whose entries are $\pm 1$, with the goal of making progress on finding the largest ratio by which a matrix's operator norm can increase when decreasing the modulus of its entries (i.e., the value of the quantity~\eqref{E1}). We showed that this quantity equals $\sqrt{n}$ if and only if there is a Hadamard matrix of order $n$, so computing it in general is probably extremely difficult. However, we presented an algorithm that allows for its computation for small $n$, and we computed it exactly when $n \leq 8$.

We then used our techniques to improve upon known results about almost Hadamard matrices. We have left numerous questions open, some of which we summarize here:

\begin{itemize}
    \item We expect that $r_n$ is attained by members of $\Q_n(\R)$ that are ``close to orthogonal'' in some sense, but it is not clear exactly how to make that precise. For example, it is \emph{not} true that $r_n$ is attained by a member of $\Q_n(\R)$ with maximum determinant: we showed in Theorem~\ref{thm:r5} that all maximizers of $r_7$ are equivalent to the circulant matrix~\eqref{eq:r7_circ} and thus have determinant with absolute value $512$, whereas the maximum determinant of a matrix in $\Q_7(\R)$ is $576$. Is it at least true that $r_n$ is always attained by an invertible member of $\Q_n(\R)$?
    
    \item We showed in Lemma~\ref{lem:rn_nondec} that the sequence $\{r_n\}$ is non-decreasing, but we expect that it is actually strictly increasing. Numerics from Table~\ref{tab:small_dim_numerics} suggest that $r_n > \sqrt{n-1}$ which, if true, would imply strict monotonicity of $\{r_n\}$.
    
    \item The argument used to find the decomposition in Equation~\eqref{eq:M3_norm} shows that if $M$ has only $k$ different rows (or columns), then $\|M\|_{\textup{S}} \leq \sqrt{k}$. If the conjecture that $r_n > \sqrt{n-1}$ is true, then this would imply that $r_n$ is always attained by a matrix with distinct rows and distinct columns.
    
    \item We have been unable to find a matrix in $\mathbb{Q}_{13}(\mathbb{R})$ that exhibits a gap between $rC_{13}$ and $r_{13}$; a rough numerical search suggests that $r_{13}$ might be attained by a circulant matrix. At first this seems somewhat strange, but something similar happens when considering the maximum determinant of a member of $\mathbb{Q}_n(\mathbb{R})$: it is attained by a circulant matrix when $n = 13$ (see \cite{oeisA003433,oeisA215723}, for example). Based on these numerics, we conjecture that $r_n = rC_n$ if and only if $n \in \{1,3,4,5,7,13\}$.
\end{itemize}

\noindent \textbf{Acknowledgements.} Several colleagues have commented on an earlier version of this note. We thank, in particular, Rajendra Bhatia, Erik Christensen, Ken Davidson, Milan Hladnik, and Gilles Pisier. N.J.\ was supported by NSERC Discovery Grant RGPIN-2022-04098.

\bibliographystyle{alpha}
\bibliography{bib}

\end{document}